\documentclass[12pt]{extarticle}
\usepackage{amsmath, amsthm, amssymb, mathtools, hyperref, xcolor}
\usepackage[english]{babel}
\usepackage{csquotes}
\usepackage[shortlabels]{enumitem}
\usepackage{graphicx}
\usepackage{adjustbox}
\usepackage{xspace}
\usepackage{tikz}
\usetikzlibrary{arrows,calc,fit,matrix,positioning,cd}
\usepackage{cleveref}
\usepackage{comment}
\setlength{\marginparwidth}{2.3cm}
\usepackage{todonotes}


\tolerance 10000
\headheight 0in
\headsep 0in
\evensidemargin 0in
\oddsidemargin \evensidemargin
\textwidth 6.5in
\topmargin .25in
\textheight 8.7in

\definecolor{cb-yellow}{RGB}{221,170,51}
\definecolor{cb-red} {RGB}{187,85,102}
\definecolor{cb-green}{RGB}{17,119,51}

\hypersetup{
    pdfauthor={Thomas Kahle, Hal Schenck, Bernd Sturmfels, Maximilian Wiesmann},
    pdftitle={The Likelihood Correspondence},
    pdfsubject={},
    pdfkeywords={},
    colorlinks=true,
    linkcolor=cb-red,
    filecolor=cb-red,      
    urlcolor=cb-green,
    citecolor=cb-green,
}

\newtheorem{theorem}{Theorem}
\numberwithin{theorem}{section}
\newtheorem{proposition}[theorem]{Proposition}
\newtheorem{lemma}[theorem]{Lemma}
\newtheorem{corollary}[theorem]{Corollary}

\theoremstyle{definition}
\newtheorem{definition}[theorem]{Definition}
\newtheorem{remark}[theorem]{Remark}
\newtheorem{example}[theorem]{Example}

\newcommand{\ZZ}{\mathbb{Z}}
\newcommand{\NN}{\mathbb{N}}
\newcommand{\PP}{\mathbb{P}}
\newcommand{\CC}{\mathbb{C}}

\newcommand{\cA}{\mathcal{A}}
\newcommand{\cE}{\mathcal{E}}
\newcommand{\cF}{\mathcal{F}}

\newcommand{\cL}{\mathcal{L}}
\newcommand{\cO}{\mathcal{O}}
\newcommand{\R}{\mathcal{R}}
\newcommand{\V}{V} 
\newcommand{\Qi}{Q_{\backslash 1}}

\DeclareMathOperator{\coker}{coker}
\DeclareMathOperator{\Hom}{Hom}

\DeclareMathOperator{\Sym}{Sym}
\DeclareMathOperator{\pd}{pdim}
\DeclareMathOperator{\rk}{rk}
\DeclareMathOperator{\codim}{codim}

\DeclareMathOperator{\Der}{Der}
\DeclareMathOperator{\ann}{ann}

\newcommand{\midcolon}{\, : \,}
\newcommand\scalemath[2]{\scalebox{#1}{\mbox{\ensuremath{\displaystyle #2}}}}

\setcounter{MaxMatrixCols}{20}

\date{}

\title{\textbf{The Likelihood Correspondence}}
\author{Thomas Kahle, Hal Schenck, Bernd Sturmfels, and Maximilian Wiesmann
}

\begin{document}
\maketitle

\begin{abstract}    \noindent       
    An arrangement of hypersurfaces in projective space is 
    strict normal crossing (SNC) 
    if and only if its Euler discriminant is nonzero.  We study the critical loci of 
    arbitrary Laurent monomials in the equations
    of the smooth hypersurfaces. The family of these loci forms an
     irreducible variety in the product of two projective spaces,
    known in algebraic statistics as the likelihood correspondence 
    and in particle physics as the scattering correspondence.
    We establish an explicit determinantal representation for the minimal generators of
        the bihomogeneous prime ideal that defines this variety.
\end{abstract}

\section{Introduction}
\label{sec:intro}
We study complements of projective hypersurface arrangements. When the hypersurfaces are linear, these are hyperplane arrangements, which  have a long tradition in combinatorics,  geometry, topology, and singularity theory.
Commutative algebra is also a key player, due to the importance of the ideals and modules 
one attaches to arrangements.  A central object is the module of derivations that are tangent to the hyperplanes.
Terao's longstanding conjecture states that freeness of this module
is determined by the underlying matroid~\cite{OT}.

In spite of their ubiquity in the mathematical literature, 
hyperplanes and their matroids are too restrictive
for many applications. The arrangements 
encountered {\em in the sciences} are usually non-linear \cite{ArrangementsAndLikelihood}. More often than not one faces
hypersurfaces of degree two or~more.

We fix an arrangement 
in projective space $\PP^{n-1}$ defined by $m$
homogeneous  polynomials $f_1,f_2,\ldots,f_m$ of
degrees $d_1,d_2,\ldots,d_m$ in  $ \CC[x_1,\ldots,x_n]$.
Let $\mathcal{A} = \V(f)$ for $f = f_1 f_2 \cdots f_m$. We
consider functions on $\PP^{n-1} \backslash \mathcal{A}$
that are given by Laurent monomials in these polynomials:
\begin{equation}
\label{eq:likelihood}
 f^s \,\, = \,\, f_1^{s_1} f_2^{s_2} \cdots f_m^{s_m}.  \end{equation}
 Here, $s_1,s_2,\ldots,s_m$ are integers such that the expression (\ref{eq:likelihood})
is  homogeneous of degree zero:
\begin{equation}
\label{eq:momentumconservation}
d_1 s_1 \,+ \, d_2 s_2 \,+\, \cdots \,+\, d_m s_m \,\, = \,\, 0. 
\end{equation}
 After choosing an appropriate branch of the complex logarithm, we pass to
 the function
\begin{equation}
\label{eq:loglikelihood}
\ell_\mathcal{A} \,\, = \,\,
{\rm log}(f_1^{s_1} f_2^{s_2} \cdots f_m^{s_m})\,\,=\,\,
s_1  {\rm log}(f_1) \,+\,
s_2  {\rm log}(f_2) \,+\,
\cdots + \,
s_m  {\rm log}(f_m) .
 \end{equation}
The product (\ref{eq:likelihood}) is the
 {\em master function} in singularity theory~\cite{cohen2011critical}. Its logarithm
  (\ref{eq:loglikelihood})  is the {\em scattering potential} in physics~\cite{Lam24}.
  Using terms from statistics~\cite{BCF23},
 we refer to  (\ref{eq:likelihood})  as the {\em likelihood function}
and  (\ref{eq:loglikelihood}) as~the {\em log-likelihood function}.
The latter is defined for arbitrary complex numbers $s_1,s_2,\ldots,s_m$.
   We assume (\ref{eq:momentumconservation}) so that
 (\ref{eq:loglikelihood}) makes sense on projective space~$\PP^{n-1}$.

The likelihood correspondence $\mathcal{L}_{\mathcal{A}}$ 
of the arrangement $\mathcal{A}$ is a subvariety in  $\PP^{m-1} \times \PP^{n-1}$.
It is the Zariski closure of the set of pairs $(s,x)$,
where $s = (s_1,s_2,\dotsc,s_m) \in \CC^{m}$, and 
$x = (x_1,x_2,\dotsc,x_n) \in \CC^{n} \backslash \mathcal{A}\,$
is any (non-branching) critical point of the function~$\ell_\mathcal{A}$.
See \cite[\S 1]{HS14} and~\cite{ArrangementsAndLikelihood} for
background and the version for implicitly defined arrangements.
\begin{definition}
The {\em likelihood correspondence} $\cL_{\cA}$ is the Zariski closure in $\PP^{m-1}\times \PP^{n-1}$ of
\begin{equation}
\label{eq:likcor}
  \left\lbrace (s, x)\in \CC^{m}\times \CC^{n}\,\colon\,
    \frac{\partial \ell_\cA}{\partial x_i}(x,s)=0,\, i=1,\dots,n,\,
    f^s(x) \neq 0,\, F(x)\in X_{\text{reg}} \right\rbrace.
\end{equation}
Here $X$ denotes the Zariski closure of the image of
$F\colon\CC^n\rightarrow\CC^{m},\, x \mapsto (f_1(x),\dots ,
f_m(x))$, and $X_{\text{reg}}$ is its set of nonsingular points.  The
\emph{likelihood ideal}~$I_\cA$ is the vanishing ideal of~$\cL_\cA$.
\end{definition}

Our Theorems \ref{thm:main} and \ref{thm:main2} 
furnish an explicit matrix description of the likelihood ideal~$I_\mathcal{A}$,
under the assumption that the arrangement $\cA$ is {\em generic}.
The following definition makes our notion of genericity precise.
See \cite[\href{https://stacks.math.columbia.edu/tag/0CBN}{Lemma~41.21.2}]{stacks} 
and the illustration in Example \ref{ex:1plus3}.
\begin{definition}
The arrangement $\cA$ is {\em strict normal crossing (SNC)} if each $f_i$ is irreducible 
and for any subset $I\subset \{1,\dotsc, m\}$ the variety $\bigcap_{i\in I} V(f_i)$
is smooth of codimension $|I|$.
\end{definition}

The SNC hypothesis is equivalently characterized by the non-vanishing 
of the {\em Euler discriminant}, which we introduce in Section \ref{sec:main_result}.
In Theorem~\ref{thm:Euler_discriminant_equals_mixed_discriminant} we show
that the SNC hypothesis corresponds to the Euler characteristic
of $\PP^{n-1} \backslash \cA$ attaining its generic value.

The computational advantage of our new matrix description is dramatic.
For a test case, consider
 $m=5$ quadrics in $\PP^3$, so $n=4$. Clearing
dominators of the rational functions~(\ref{eq:likcor}),
 and then saturating in {\tt Macaulay2},
has no chance of success. The advances
in \cite[Section~5]{ArrangementsAndLikelihood} help a little bit, but not much.
Our Theorem \ref{thm:main} simply gives a formula for the answer: 
the ideal $I_\cA$ has $29$ minimal generators: the minors of 
the  $6 \times 8$ matrix in (\ref{eq:ourmatrix}) together 
with \eqref{eq:momentumconservation}.

Our results are of independent interest in commutative algebra: Theorem~\ref{thm:main} implies that the likelihood module $M(\cA)$  
from \cite{ArrangementsAndLikelihood}
is of \emph{linear type},
when $\cA$ is an SNC divisor.  This means that the Rees algebra of  $M(\cA)$ is equal to the symmetric algebra. 
Equivalently,  $I_\cA$ equals the \emph{pre-likelihood ideal}~\cite{ArrangementsAndLikelihood}. We conclude 
that SNC arrangements are {\em gentle} (Corollary~\ref{cor:genericImpliesGentle}).

Catanese et al.~\cite[Theorem 1]{catanese2006maximum} showed that 
the {\em ML degree}, i.e.~the number of critical points of $\ell_\cA$, is always
bounded above by the coefficient of $z^{n-1}$ in the
generating~function 
\begin{equation*}
  \frac{(1-z)^n}{(1-d_1z)(1-d_2z)\cdots(1-d_mz)}~.
\end{equation*}
Equality holds when the arrangement $\cA$ is SNC. More generally,
we show in Lemma \ref{lem:generic_multidegree} that the
coefficients of this generating function encode the multidegree of the likelihood ideal $I_\cA$.
    
\smallskip

We begin with an arrangement of ML degree $13$. This
  illustrates the concepts above.
  
\begin{example}[A line and three conics] \label{ex:1plus3}
Let $n=3,m=4$, $d_1 = 1$, $d_2=d_3=d_4=2$.
The line $f_1$ and the conics
$f_2,f_3,f_4$ 
form an SNC arrangement $\mathcal{A}$ in the
projective plane $\PP^2$ when
\begin{enumerate}
\item each of the three conics is irreducible; \vspace{-0.2cm}
\item no pair among the four curves is tangent; \vspace{-0.2cm}
\item no triple among the four curves intersects.
\end{enumerate}
The likelihood
correspondence $\mathcal{L}_{\mathcal{A}}$ is an irreducible
surface in $\PP^3 \times \PP^2$.
We shall compute its
 prime ideal $I_\cA$ in 
$ \CC[s,x] = \CC[s_1,s_2,s_3,s_4,x_1,x_2,x_3]$,
assuming the SNC conditions above.

The surface $\mathcal{L}_{\mathcal{A}}$ is defined by 
the three partial derivatives of the log-likelihood function:
\begin{equation}
\label{eq:direct}
\frac{s_1}{f_1} \frac{\partial f_1}{\partial x_i} \, + \,
\frac{s_2}{f_2} \frac{\partial f_2}{\partial x_i} \, + \,
\frac{s_3}{f_3} \frac{\partial f_3}{\partial x_i} \, + \,
\frac{s_4}{f_4} \frac{\partial f_4}{\partial x_i} \,\,\,
= \,\,\, 0 \qquad {\rm for} \,\, i=1,2,3. 
\end{equation}
If we multiply these equations by $x_1,x_2,x_3$, and add them up, then Euler's relation reveals
\begin{equation}
\label{eq:euler}
 s_1 \, + \, 2 s_2 \, + \, 2 s_3 \, + \, 2 s_4\,\, = \,\, 0 . 
\end{equation}
To solve (\ref{eq:direct}),
we now replace the third equation
$\partial \ell_\cA/\partial x_3 = 0$ with
the linear equation (\ref{eq:euler}).
We are left with two rational functions of degree $-1$. 
By clearing denominators, we obtain two polynomials
of degree $(1,6)$ in $\CC[s,x]$, i.e.~they are linear in $s$ and of degree $6$ in~$x$.

Given a general point $s $ on the plane (\ref{eq:euler}) in $\PP^3$,
we intersect two plane curves of degree~$6$. This yields
$36$ points in $\PP^2$, in accordance with
B\'ezout's Theorem. However,  
$23$ of these~$36$ are extraneous. This includes 
the $2\!+\!2\!+\!2\!+\!4\!+\!4\!+\!4 = 18$ pairwise
intersection points $\{f_i = f_j = 0\}$. We also have
five extraneous solutions with
$x_3=0$, since using (\ref{eq:euler}) in our derivation
is equivalent to using
 $x_3 \frac{\partial \ell_\cA}{\partial x_3} = 0$.
 Therefore, $\ell_\cA$ has $13$ critical points
in $\PP^2 \backslash \mathcal{A}$.

The likelihood ideal $I_\cA$ lives in the polynomial ring
$\CC[s,x]$ with $7$ variables. It can be computed from the numerators in
 (\ref{eq:direct}) by saturating with respect to the product $f = f_1 f_2 f_3 f_4 $. 

 Our matrix description of $I_{\cA}$ circumvents this saturation step and is far more efficient.
It shows that $I_\cA$ has six minimal generators:
the one in (\ref{eq:euler})
of degree $(1,0)$, three of degree $(1,4)$, and two of degree $(1,5)$.
The last five are the
maximal minors of the $4 \times 5$ matrix
\begin{equation}
\label{eq:4by5} \begin{pmatrix}
\,f_2 & 0 & 0 &  \frac{\partial f_2}{\partial x_1} &  \frac{\partial f_2}{\partial x_2} &  \frac{\partial f_2}{\partial x_3} 
\,\medskip \\
\, 0  & f_3 & 0 & \frac{\partial f_3}{\partial x_1} &  \frac{\partial f_3}{\partial x_2} &  \frac{\partial f_3}{\partial x_3}
\, \medskip \\
\, 0  &  0 & f_4 &  \frac{\partial f_4}{\partial x_1} &  \frac{\partial f_4}{\partial x_2} &  \frac{\partial f_4}{\partial x_3}
\,  \smallskip \\
\,s_2 & s_3 & s_4 & 0 & 0 & 0 \\
\end{pmatrix}
\scalemath{0.89}{\begin{pmatrix} 
\,\,  1 & \,0 &\, 0 & 0 & 0 \smallskip \\
\,\,  0 & \,1 &\, 0 & 0 & 0 \smallskip \\
\,\, 0 & \,0 &\, 1 & 0 & 0 \smallskip \\
\,\, 0 & \,0 &\, 0 & \frac{\partial f_1}{\partial x_3} & 0 \smallskip \\
\,\, 0 & \,0 &\, 0 & 0 &\frac{\partial f_1}{\partial x_3} \smallskip \\
\,\, 0 & \,0 &\, 0 & \!\!-\frac{\partial f_1}{\partial x_1} &\! \!\! -\frac{\partial f_1}{\partial x_2}\, \\
\end{pmatrix}.}
\end{equation}
This matrix product description of our ideal $I_\cA$ will be explained after Theorem \ref{thm:main2}.
\end{example}

This paper is organized as follows.
In Section \ref{sec:main_result} we present our main results, 
starting with all relevant definitions, and examples to illustrate the key concepts. The proofs are spread over the two sections thereafter.
Section \ref{sec:det_ideals} develops tools from commutative algebra.
The focus is on determinantal ideals, circuit syzygies, and the Buchsbaum--Rim resolution.

In Section \ref{sec:multidegrees} we determine
the multidegree of the generic likelihood correspondence,
and we show that it matches that of the proposed determinantal ideal using the Giambelli--Thom--Porteous Formula.
Theorem \ref{thm:Euler_discriminant_equals_mixed_discriminant}
unifies algebraic and topological approaches to
the Euler discriminant.
Finally, in Section \ref{sec:homological} we connect the likelihood correspondence to the module $\Der(\cA)$ of derivations tangent to $\mathcal{A}$, as well as to certain Rees and symmetric algebras.

We close the introduction with a remark about the significance
of genericity. Admittedly, the scenarios used by practitioners
are almost never generic. Models from physics or statistics (cf.~\cite[Section 3]{ArrangementsAndLikelihood})
do not satisfy the SNC hypothesis. However, for many applied projects,
the generic case can be the best point of departure.
In our setting, for any special model~$\cA$,  the matrix (\ref{eq:ourmatrix}) drops rank
on the likelihood correspondence $\mathcal{L}_\cA$.
We found that matrix to be a much better input for computing $I_\cA$
 than the rational functions in (\ref{eq:likcor}).
 Many success stories in applied algebraic geometry
begin with concise theorems about generic instances.
\section{Main Results}\vskip -.06in
\label{sec:main_result}

Let $f_1,f_2,\ldots,f_m $ be 
homogeneous elements of degrees $d_1,d_2,\ldots,d_m $ in
the polynomial ring $R = \CC[x_1,\ldots,x_n]$.
These define an arrangement $\mathcal{A}$ of $m$ algebraic
hypersurfaces in $\PP^{n-1}$. With each hypersurface we associate a parameter~$s_i$. 
The coordinate ring of $\PP^{m-1} \times \PP^{n-1}$ 
is denoted $S = R[s_1,\ldots,s_m]$.  It is equipped with a $\ZZ^2$-grading under which $\deg(s_i) = (1,0)$
for all $i$ and $\deg(x_j) = (0,1)$ for all~$j$.
Our object of study is the prime ideal $I_\mathcal{A} \subset S$ that defines
the likelihood correspondence $\mathcal{L}_\mathcal{A}$.
This ideal is homogeneous in the $\ZZ^2$-grading on~$S$.

We assume throughout that $m \geq n+1$, and that
the given arrangement $\cA$ is SNC.
%
If a $\ZZ^2$-homogeneous polynomial $f$  in $S$ has bidegree $(1,d)$,
for some $d \in \NN$, then we simply say that $f$ {\em has degree $d$}.
We shall see that all generators of the likelihood ideal $I_\mathcal{A}$
have this property. In particular, the linear relation
  (\ref{eq:momentumconservation}) is a generator of degree $0$.
  All other generators of $I_\mathcal{A}$ have positive degree, and we list them according to their degrees.

To this end, we now introduce some polynomial matrices.
First, there is the Jacobian matrix 
 $J_f = (\partial f_i/\partial x_j)$ of size $m \times n$.
 Using this, we introduce the $m \times (m+n)$ matrix
\vskip -.15in
\begin{equation*}
 Q \,\,\, = \,\,\, \bigl(\,\,{\rm diag}(f_1,f_2,\ldots,f_m) \,\,|\, \,J_f \,\,\bigr).
\end{equation*}
The matrix $Q^s$ is obtained by augmenting $Q$ with
the row vector $(s_1,s_2,\ldots,s_m, 0,0,\ldots,0)$.
We write $Q_{\backslash i}$ for the $m \times (m+n-1)$ matrix obtained from
$Q$ by deleting the $i$th column. We similarly define the matrix $Q^s_{\backslash i}$
with $m+1$ rows and $m+n-1$ columns. In particular,
\begin{equation}
\label{eq:ourmatrix}
    Q^s_{\backslash 1} \,\,= \,\,\begin{pmatrix}
	\, 0 & \cdots & 0 & \frac{\partial f_1}{\partial x_1} & \frac{\partial f_1}{\partial x_2} & \cdots & \frac{\partial f_1}{\partial x_n\ } \smallskip
\\
 \,        f_2 & \cdots & 0 & \frac{\partial f_2}{\partial x_1} & \frac{\partial f_2}{\partial x_2} & \cdots & \frac{\partial f_2}{\partial x_n\ } \smallskip \\
   \,	 \vdots & \ddots & \vdots & \vdots & \vdots & \ddots & \vdots \smallskip \\
\,         0 & \cdots & f_m & \frac{\partial f_m}{\partial x_1} & \frac{\partial f_m}{\partial x_2} & \cdots & \frac{\partial f_m}{\partial x_n\ }  \smallskip \\
\,  s_2 & \cdots & s_m & 0 & 0 & \cdots & 0 
    \end{pmatrix}.
\end{equation}

\begin{theorem} \label{thm:main} Suppose that $d_1,d_2,\ldots,d_m \geq 2$
and the arrangement $\mathcal{A}$ is SNC.
Then the likelihood ideal 
$I_\mathcal{A}$ is minimally generated by  (\ref{eq:momentumconservation})
and the
$\binom{m+n-1}{n-2}$ maximal minors of the matrix~$ Q^s_{\backslash 1} $.
For $\,i \in \{2,3,\ldots,n\}$, we have 
$\, \binom{n}{i} \cdot \binom{m-1}{i-2} \,$ generators in degree
$d_1 + d_2 + \cdots + d_m - i$. These generators are the maximal
minors that use
 precisely $i$ of the last $n$ columns in~$ Q^s_{\backslash 1} $.
\end{theorem}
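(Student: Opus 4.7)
The plan is to prove both inclusions separately. Let $I'$ denote the ideal generated by the Euler relation $\sum_i d_i s_i$ and the maximal minors of $Q^s_{\backslash 1}$.

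For the easy direction $I' \subseteq I_\mathcal{A}$, the Euler relation vanishes on $\cL_\cA$ by multiplying each critical equation $\partial \ell_\cA/\partial x_j = 0$ by $x_j$ and summing via Euler's identity $\sum_j x_j \partial f_i/\partial x_j = d_i f_i$, exactly as in Example~\ref{ex:1plus3}. For the maximal minors, the key is to exhibit an explicit syzygy: the vector $v = (-d_1, \ldots, -d_m, x_1, \ldots, x_n)^T$ lies in the right kernel of $Q^s$ on $\cL_\cA$. Indeed, the $i$-th entry of $Q^s v$ is $-d_i f_i + \sum_j (\partial f_i/\partial x_j) x_j$, which vanishes by Euler, while the bottom entry is $-\sum_i d_i s_i$, which vanishes by the relation just established. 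Since $v_1 = -d_1 \neq 0$ (using $d_1 \geq 2$), the first column of $Q^s$ lies in the span of the remaining $m+n-1$ columns on $\cL_\cA$, so $\rk Q^s_{\backslash 1} = \rk Q^s \leq m$ there, forcing every $(m+1)\times(m+1)$ minor of $Q^s_{\backslash 1}$ to vanish.

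For the reverse inclusion $I_\cA \subseteq I'$, I would proceed by matching codimension, multidegree, and using primeness. The likelihood correspondence $\cL_\cA$ is irreducible of codimension $n-1$ in $\PP^{m-1}\times\PP^{n-1}$, matching the expected codimension of the ideal of maximal minors of an $(m+1)\times(m+n-1)$ matrix. Using the Buchsbaum--Rim complex and circuit syzygies from Section~\ref{sec:det_ideals}, I would argue that under the SNC hypothesis the matrix $Q^s_{\backslash 1}$ is sufficiently generic that its ideal of maximal minors is perfect (Cohen--Macaulay of expected codimension) and defines an irreducible scheme. The Giambelli--Thom--Porteous formula then yields the multidegree of $V(I')$, which matches that of $I_\cA$ computed in Section~\ref{sec:multidegrees} via Lemma~\ref{lem:generic_multidegree} and the Catanese et al.\ generating function. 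Together with Step~1 and irreducibility of $\cL_\cA$, this gives $I' = I_\cA$. The degree-by-degree count in the statement then follows by expanding any maximal minor along its bottom $s$-row: only permutations picking one $s_{k_0}$ survive, the complementary $m$ columns determine whether $i$ derivative columns are selected, and the surviving term has bidegree $(1, d_1 + \cdots + d_m - i)$, giving exactly $\binom{n}{i}\binom{m-1}{i-2}$ minors of that bidegree. Minimality is then read off from the minimal Buchsbaum--Rim resolution, and the Euler relation is minimal since it is the unique generator in bidegree $(1,0)$.

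The main obstacle is the genericity claim for $Q^s_{\backslash 1}$. Its entries are highly structured — a diagonal block of $f_i$, a Jacobian block, and a linear $s$-row — hence far from algebraically independent, so classical determinantal theorems (Eagon--Northcott, Hochster--Eagon) do not apply off the shelf. The SNC hypothesis is precisely what prevents codimension drop and pathological components, and translating it into the precise genericity needed for the Buchsbaum--Rim complex to be a resolution is the technical heart of the argument, requiring the circuit syzygy analysis developed in Section~\ref{sec:det_ideals}.
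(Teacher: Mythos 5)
Your treatment of the hard containment follows the paper's own route: show that $K = I_{m+1}(Q^s_{\backslash 1}) + \langle \sum_i d_i s_i\rangle$ is Cohen--Macaulay of codimension $n$ via the Buchsbaum--Rim/Eagon--Northcott machinery, compute its multidegree by Giambelli--Thom--Porteous, match it against the Catanese et al.\ generating function, and conclude $K = I_\cA$ from primeness and unmixedness. The column-selection count $\binom{n}{i}\binom{m-1}{i-2}$ and the minimality claim are also handled as in the paper.

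There is, however, a genuine gap in your ``easy'' direction. The vector $v = (-d_1,\ldots,-d_m,x_1,\ldots,x_n)^T$ does lie in $\ker Q^s$ along $\cL_\cA$ (its first $m$ coordinates of $Q^s v$ vanish identically by Euler's identity, and the last one is $-\sum_i d_i s_i$), and since $d_1 \neq 0$ this correctly shows that deleting the first column does not change the rank, i.e.\ $\rk Q^s_{\backslash 1} = \rk Q^s$ on $\cL_\cA$. But a single kernel vector of an $(m+1)\times(m+n)$ matrix only bounds the rank by $m+n-1$; it does not give the bound $\rk Q^s \leq m$ that you assert, and without that bound the maximal minors of $Q^s_{\backslash 1}$ need not vanish. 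The missing ingredient is a \emph{row} dependency, not a column dependency: on the dense open part of $\cL_\cA$ where $f \neq 0$, the critical equations $\sum_i (s_i/f_i)\,\partial f_i/\partial x_j = 0$ say precisely that the covector $(s_1/f_1,\ldots,s_m/f_m,-1)$ annihilates $Q^s$ from the left, which forces $\rk Q^s \leq m$ and hence the vanishing of all $(m+1)\times(m+1)$ minors; Zariski closure then gives the vanishing on all of $\cL_\cA$. This is exactly what the paper's Lemma~\ref{lem:localEqual} establishes, in the sharper form that the minor ideal of $Q^s_{\backslash 1}$ equals the ideal (\ref{eq:idealwithdenominators}) after localizing at $f$ --- a fact you also need later to know that $\cL_\cA$ is a component of $V(K)$ and not merely contained in it set-theoretically at special points.

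One smaller slip: $\cL_\cA$ has codimension $n$ in $\PP^{m-1}\times\PP^{n-1}$, not $n-1$. It is the determinantal locus $V(I_{m+1}(Q^s_{\backslash 1}))$ alone, before intersecting with the hyperplane $\sum_i d_i s_i = 0$, that has the expected codimension $(m+n-1)-(m+1)+1 = n-1$. Keeping these two codimensions apart is needed to apply Giambelli--Thom--Porteous correctly: one computes the class of the rank-$m$ locus in degree $n-1$ and then multiplies by $\sigma$ for the linear form, as in Lemma~\ref{lem:multidegrees_agree}.
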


This theorem provides a concise determinantal representation for
the likelihood correspondence $\mathcal{L}_\mathcal{A}$ of any 
model in algebraic statistics parametrized by generic polynomials. For $1$-dimensional models
 $(n=2)$, the matrix $ Q^s_{\backslash 1} $ is square, and 
 $I_\mathcal{A} = \bigl\langle \,\sum_{i=1}^m d_i s_i\,,\, \det(\Qi^s)  \bigr\rangle$.
 In general, $I_\mathcal{A}$ has codimension $n$.  
 We shall see that the likelihood ideal $I_\mathcal{A}$ is Cohen--Macaulay.

\begin{remark} It is instructive to examine what happens when $m$ drops below $n+1$.
If $m \leq n$ then each $s_i$ is a factor of some maximal minor of $ Q^s_{\backslash 1} $, and
 our ideal  is not prime. 
One of the minimal primes is $\langle s_1,s_2,\ldots,s_m \rangle$.
However, our determinantal ideal is still radical and Cohen--Macaulay, and
the count of minimal generators in Theorem \ref{thm:main} 
remains valid.
\end{remark}

We next state a more general theorem that also applies
 when some of the hypersurfaces
are hyperplanes. Suppose that precisely $L$ of the given polynomials
$f_1,f_2,\ldots,f_m$ have degree~$1$. We set $\ell = {\rm min}(n,L)$.
We have $\ell=0$ in Theorem \ref{thm:main},
and we have $\ell=1$ in Example \ref{ex:1plus3}.

\begin{theorem} \label{thm:main2}
If $\mathcal{A}$ is SNC then the
likelihood ideal  $I_\mathcal{A}$ is minimally generated by (\ref{eq:momentumconservation})
and $\binom{m+n-1-\ell}{n-2}$ of the maximal minors of $\, Q^s_{\backslash 1} $.
We have
$ \binom{n-\ell}{i-\ell} \cdot \binom{m-1}{i-2} $ generators in~degree
$ d_1 + d_2 + \cdots + d_m-i$ for
${\rm max}(2,\ell) \leq i \leq n$.
These arise from the columns
as in Theorem~\ref{thm:main}.
\end{theorem}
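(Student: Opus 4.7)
My strategy is to adapt the proof of Theorem~\ref{thm:main} by identifying the maximal minors of $Q^s_{\backslash 1}$ that become redundant when some of the $f_i$ are linear. The SNC hypothesis forces the gradients of the $L$ linear polynomials to span a subspace of $\CC^n$ of dimension exactly $\ell = \min(L,n)$, since any $k\le n$ of them must be linearly independent wherever they intersect transversely. A linear change of coordinates on $\PP^{n-1}$ induces a ring automorphism of $R$ that preserves $I_\cA$, so after relabeling I may assume that $f_i = x_i$ for $i = 1,\dots,\ell$.

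In these normalized coordinates, the first $\ell$ rows of $Q^s_{\backslash 1}$ acquire a very rigid shape: row~$1$ is concentrated entirely in the first Jacobian column, while for $2 \le i \le \ell$ row $i$ has nonzero entries only $x_i$ in the $(i-1)$st diagonal column and $1$ in the $i$th Jacobian column. The column operations
\[
    C^{\mathrm{diag}}_{i-1} \;\longmapsto\; C^{\mathrm{diag}}_{i-1} - x_i \, C^{\mathrm{Jac}}_{i}, \qquad i = 2,\dots,\ell,
\]
preserve the ideal of maximal minors and annihilate every entry of rows $1,\dots,\ell$ outside the first $\ell$ Jacobian columns. Laplace expansion along those $\ell$ rows then shows that a maximal minor of $Q^s_{\backslash 1}$ vanishes identically unless the first $\ell$ Jacobian columns are all selected; in that case it equals, up to sign, a maximal minor of the reduced $(m{+}1{-}\ell) \times (m{+}n{-}1{-}\ell)$ matrix obtained by deleting those rows and columns. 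Choosing $m{+}1{-}\ell$ columns of this reduced matrix amounts to fixing $i-\ell$ of the $n-\ell$ remaining Jacobian columns and $m{+}1{-}i$ of the $m-1$ modified diagonal columns, for $\max(2,\ell) \le i \le n$; a bidegree bookkeeping (using $\sum_{j=1}^{\ell} d_j = \ell$) yields $\binom{n-\ell}{i-\ell}\binom{m-1}{i-2}$ minors of bidegree $(1, d_1+\cdots+d_m - i)$, totaling $\binom{m+n-1-\ell}{n-2}$ by Vandermonde.

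To conclude that these surviving minors, together with~\eqref{eq:momentumconservation}, form a \emph{minimal} generating set of the prime ideal $I_\cA$, I would invoke the Buchsbaum--Rim machinery of Section~\ref{sec:det_ideals} applied to the reduced matrix: its ideal of maximal minors is Cohen--Macaulay of codimension $n$, and its multidegree, computed via a Giambelli--Thom--Porteous calculation (Section~\ref{sec:multidegrees}), matches the generic multidegree of $\mathcal{L}_\cA$ from Lemma~\ref{lem:generic_multidegree}. Two prime ideals of equal codimension and multidegree in the same containment relation must coincide, so the determinantal ideal equals $I_\cA$; minimality of the listed generators then follows from their distinct bidegrees and the ranks recorded by the Buchsbaum--Rim complex. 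The main obstacle is this last step: one has to check that the pruning forced by the $\ell$ linear forms exactly cancels the ``missing'' minors without disturbing the Chow class of the maximal-minor locus, so that the reduced determinantal ideal still records precisely the ML degree predicted by~\cite{catanese2006maximum}.
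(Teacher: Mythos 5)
Your proposal is correct and follows essentially the same route as the paper: the constant Jacobian rows of the $\ell$ linear forms are used for column operations that reduce $Q^s_{\backslash 1}$ to an $(m+1-\ell)\times(m+n-1-\ell)$ matrix (exactly the reduction displayed after Corollary~\ref{cor:mainLinear}), the surviving minors are counted as you do, and equality of the resulting determinantal ideal with $I_\cA$ is obtained from Cohen--Macaulayness plus the multidegree comparison of Lemma~\ref{lem:multidegrees_agree}. Two small points: the determinantal ideal is not known to be prime in advance (that is what is being proved), so the closing step must be phrased as ``an unmixed Cohen--Macaulay ideal contained in a prime of the same codimension and the same multidegree equals that prime''; and the obstacle you flag at the end is vacuous, since the pruning does not change the ideal of maximal minors at all --- hence not its Chow class --- so the Giambelli--Thom--Porteous computation for the full matrix $Q^s_{\backslash 1}$ already settles it.
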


In Theorem \ref{thm:main2}, the matrix $Q^s_{\backslash 1}$ has constant
entries $\partial f_i / \partial x_j$ in the $\ell$ rows
where ${\rm degree}(f_i) = 1$. Using such entries for
column operations, we  replace the 
$(m+1) \times (m+n-1)$ matrix $Q^s_{\backslash 1}$ with a
matrix of size $(m+1-\ell) \times (m+n-1-\ell)$
whose maximal minors are precisely the minimal generators of $I_\mathcal{A}$.
This was shown in (\ref{eq:4by5}) for $n=3,m=4,\ell=1$.

\begin{corollary}\label{cor:mainLinear}
If $\mathcal{A}$ is an arrangement in $\PP^{n-1}$ that includes
$n$ or more hyperplanes, then its likelihood ideal $I_\mathcal{A}$  has
$\binom{m-1}{n-2}$ minimal generators, all of  degree $d_1+d_2+\cdots+d_m-n$.
In particular, if $\mathcal{A}$ is a hyperplane arrangement then
the $\binom{m-1}{n-2}$ generators have degree $m-n$.
\end{corollary}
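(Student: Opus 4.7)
The plan is to read off the corollary from Theorem \ref{thm:main2} by pushing the parameter $\ell$ to its extremal value. First I would observe that the hypothesis that $\cA$ contains $n$ or more hyperplanes translates to $L \geq n$, hence $\ell := \min(n, L) = n$. Substituting $\ell = n$ into the minimal-generator count of Theorem \ref{thm:main2} immediately yields $\binom{m+n-1-n}{n-2} = \binom{m-1}{n-2}$ maximal minors of the reduced matrix.

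Next I would examine the degree stratification. The index range $\max(2, \ell) \leq i \leq n$ collapses to the single value $i = n$ once $\ell = n$, so every minor generator from Theorem \ref{thm:main2} must sit in one and the same degree, namely $d_1 + d_2 + \cdots + d_m - n$. A consistency check with the per-degree formula $\binom{n-\ell}{i-\ell}\binom{m-1}{i-2}$ at $i = \ell = n$ produces $\binom{0}{0}\binom{m-1}{n-2} = \binom{m-1}{n-2}$, in agreement with the global count. For the final assertion I would specialize to $L = m$, i.e.\ a pure hyperplane arrangement: all $d_j = 1$, so the common degree $d_1 + \cdots + d_m - n$ reduces to $m - n$.

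The entire argument is a short arithmetic collapse of Theorem \ref{thm:main2}, so no serious obstacle is anticipated. The only subtlety worth flagging is a bookkeeping convention: Theorem \ref{thm:main2} records $(\ref{eq:momentumconservation})$ as a separate minimal generator of bidegree $(1,0)$, whereas the corollary tallies only the $\binom{m-1}{n-2}$ minimal generators of positive $x$-degree, all of which live in the single degree $d_1 + \cdots + d_m - n$ identified above.
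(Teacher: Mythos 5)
Your proposal is correct and matches the paper's route: the corollary is obtained by specializing Theorem~\ref{thm:main2} to $\ell=n$, which collapses the degree range to $i=n$ and gives $\binom{m-1}{n-2}$ minors of degree $d_1+\cdots+d_m-n$ (the paper then merely supplements this with the explicit $(m-n+1)\times(m-1)$ matrix obtained by row and column operations). Your flag about the degree-$(1,0)$ generator~(\ref{eq:momentumconservation}) being tallied separately is the right reading of the corollary's count.
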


Theorem \ref{thm:main2}  can be derived
from Theorem \ref{thm:main} using elementary row and
column operations. We now explain this
for the special case in Corollary \ref{cor:mainLinear}.
After a linear change of basis, we may
assume that $f_{m-n+1},\ldots,f_m$
are the coordinate functions $x_1,\ldots,x_n$.  This gives rise to an identity block in the Jacobian.  By column operations 
and by deleting rows, we  transform  the  matrix $Q^s_{\backslash 1}$ into
the following matrix, which has only $m-n+1$ rows and $m-1$ columns,
and whose $\binom{m-1}{m-n+1}$ maximal minors
are the minimal generators of $I_\cA$: 
\vspace{-1.6ex}
\[ 
    \scalemath{0.98}{
      \!\!  \begin{pmatrix}
	 0 & \cdots & 0 &  0 &  \cdots & \!\!0 & \frac{\partial f_1}{\partial x_1} & \frac{\partial f_1}{\partial x_2} & \cdots & \! \frac{\partial f_1}{\partial x_n} \smallskip \\
        f_2 & \cdots & 0 &  0 &  \cdots & \!\!0 & \frac{\partial f_2}{\partial x_1} & \frac{\partial f_2}{\partial x_2} & \cdots & \! \frac{\partial f_2}{\partial x_n} \smallskip \\
   	 \vdots & \ddots & \vdots & \vdots &  
   \vdots & \vdots &\!\! \vdots &  \vdots &\ddots & \vdots 
   \smallskip \\
         0 & \cdots &\! f_{m-n} \!&  0 &  \cdots & \!\! 0 & \! \frac{\partial f_{m-n}}{\partial x_1}\! &
\! \frac{\partial f_{m-n}}{\partial x_2} \!& \cdots &\!  \frac{\partial f_{m-n}}{\partial x_n} \! \smallskip \\
  s_2 & \cdots &\!s_{m-n} \!& \! s_{m-n+1} \! & \cdots &\!\! s_m & 0 & 0 & \cdots & 0 
    \end{pmatrix} \!
    }
    \scalemath{0.84}{
    \begin{pmatrix}
    1 & 0 &  \cdots  &             0 &  0 & \cdots & 0 \\
    0 & 1 &  \cdots  &              0 &  0 & \cdots & 0 \\ 
    \vdots & \vdots & \ddots &  0 &0 & \cdots & 0 \\
    0 &  0 &  \cdots & 1 &         0 & \cdots & 0 \\
    0  & 0 & \cdots & 0 &           1 & \cdots & 0 \\
    0  & 0 & \cdots & 0 &           0 & \ddots & 0 \\
0  & 0 & \cdots & 0 &           0 & \cdots & 1 \smallskip \\
    0 & 0 & \cdots &\!\!\! - x_1 &          0 & \cdots & 0 \\
     0 & 0 & \cdots&  0 &       \!  \! \!-x_2    & \cdots & 0 \\
\vdots & \vdots & \ddots & \vdots  & \vdots & \ddots & 0       \\
     0 & 0 & \cdots  & 0 &               0    & \cdots & \!\!\!\!-x_n  \\
\end{pmatrix}  
      }  .
\]

We next introduce the Euler discriminant. This is the polynomial
which furnishes the algebraic representation of
the SNC hypothesis,
which is essential
for Theorems \ref{thm:main} and~\ref{thm:main2}.
Every arrangement $\mathcal{A}$, given by polynomials $f_1,f_2,\ldots,f_m$, is
 a point in the parameter space
 \begin{equation}
 \label{eq:parameterspace}
 \PP \,\, := \,\,
\mathbb{P}^{\binom{d_1+n-1}{n-1}-1} \times
 \mathbb{P}^{\binom{d_2+n-1}{n-1}-1} \times \cdots \times
\mathbb{P}^{\binom{d_m+n-1}{n-1}-1} .
\end{equation}
Let $I$ be any non-empty subset of cardinality $\leq n$ in 
$[m] = \{1,2,\ldots,m\}$. The discriminant $\Delta_I$ is defined as
the unique (up to scaling) irreducible polynomial  on $\PP$ which vanishes
whenever $\,\bigl\{x \in \PP^{n-1} :f_i(x) = 0 \,\,{\rm for} \,\, i \in I \bigr\}$ is not a smooth variety
of codimension $| I |$. 

The polynomial $\Delta_I$ was named the {\em $\mathcal{A}_I$-discriminant}
by Gel'fand, Kapranov and Zelevinvsky \cite{gelfand2008discriminants}. 
Here $\mathcal{A}_I$  denotes the set of lattice points in a
polytope of dimension $n-2+|I|$, namely the {\em Cayley polytope}
of the scaled simplices $d_i \Delta_{n-1}$ for $i \in I$. 
 The degree of the $\mathcal{A}_I$-discriminant is given in
 Theorem 2.8 in Section 9.2.D of  \cite{gelfand2008discriminants}.
Namely, it equals $ \sum_F\, (-1)^{{\rm codim} \,F} ({\rm dim}  \,F + 1) \cdot {\rm vol}(F)$, where the sum is over all faces $F$ of the Cayley polytope ${\rm conv}(\cA_I)$.
This formula relies on the fact that the corresponding
toric variety is smooth.

We define the {\em Euler discriminant} to be the product
of the discriminants $\Delta_I$ where $I$ runs over all
non-empty subsets of cardinality at most $n$ in $[m]$.
The factor $\Delta_{\{i\}}$~is the usual {\em discriminant} of $f_i$.
This discriminant has degree $n(d_i-1)^{n-1}$ and it vanishes
when the hypersurface $\{f_i = 0\}$ is singular in $\PP^{n-1}$.
If $| I | = n-1$ then $\Delta_I$ is the {\em mixed discriminant}~\cite{cattani2013mixed}
of a square polynomial system. Finally, if $| I | = n$ then $\Delta_I$ is the
{\em resultant} for $n$ hypersurfaces in $\PP^{n-1}$.
See \cite{Pok} for a recent study in this subject area and many relevant references.

\begin{example}[$n=3$]
Consider $m$ curves of degree at least two in $\PP^2$.
Then the Euler discriminant is a
product of $m + \binom{m}{2} + \binom{m}{3}$ irreducible polynomials
in $\sum_{i=1}^m \binom{d_i+2}{2}$ unknowns, namely the coefficients of
$f_1,f_2,\ldots,f_m$. These are the coordinates of the parameter space~$\PP$.

We now describe the factors in the Euler discriminant. 
For each index $i \in [m]$, we have the {\em discriminant} $\Delta_{\{i\}}$.
This vanishes
when the curve $\{f_i = 0\}$ is singular. 
It has degree $ 3 (d_i-1)^2$ in the coefficients of $f_i$.
For each pair of curves $f_i$ and $f_j$, we have the
mixed discriminant $\Delta_{\{i,j\}}$. This vanishes when
the curves are tangent, i.e.~they fail to intersect in
$d_i d_j$ isolated points. This mixed discriminant is known classically as the
{\em tact invariant}, and it has bidegree $(d_j^2 + 2 d_i d_j - 3 d_j,\,
d_i^2 + 2 d_i d_j - 3 d_i)$, by \cite[equation (1.4)]{cattani2013mixed}.
Finally, any triple of curves contributes 
a resultant which vanishes when the three curves intersect in $\PP^2$. 
The resultant  $\Delta_{\{i,j,k\}}$ has the
tridegree $(d_j d_k, d_i d_k, d_i d_j)$ in the coefficients of $f_i,f_j,f_k$.

To be even more specific, suppose we examine four quadrics
in $\PP^2$. Their  Euler discriminant is a polynomial
in $24=6+6+6+6$ unknowns. It is the product of
$14=4+6+4  $ irreducible discriminants $\Delta_I$. 
The $\ZZ^4$-degree of this Euler discriminant equals $(33,33,33,33)$.

If we allow some of the $f_i$ to be linear, as in Theorem \ref{thm:main2},
then the factor $\Delta_{\{i\}}$ disappears, because a line in $\PP^2$
can never degenerate and become singular. Thus, in Example \ref{ex:1plus3},
the Euler discriminant has only $13$ irreducible factors,
and its $\ZZ^4$-degree equals  $(18,25,25,25)$.
\end{example}

The  arrangement $\mathcal{A}$ is SNC if and only if
the Euler discriminant does not
vanish for the tuple $(f_1,f_2,\ldots,f_m) \in \PP$
that defines~$\mathcal{A}$. This notion of genericity is precisely
the condition which ensures that our matrix $\Qi^s$ in
(\ref{eq:ourmatrix}) has the desired commutative algebra properties.

\begin{remark}
The Newton polytope of $f_i$ can be strictly contained in $d_i \Delta_{n-1}$.
The arrangement  $\cA$ might be SNC even if this happens.
Under such a specialization, the discriminant $\Delta_I$ does not
vanish, but it encodes the discriminant
for the Newton polytopes of $f_i$, $i \in I$.
\end{remark}

The notion of the Euler discriminant was first introduced by Esterov \cite{Esterov}.
 Its role in likelihood geometry is the topic of
a recent article  by Telen and Wiesmann \cite{EulerStrati}.
A detailed analysis for hyperplanes
was undertaken by Fevola and Matsubara-Heo \cite{ClaudiaSaiei}.
Building on these advances, we show in 
Theorem~\ref{thm:Euler_discriminant_equals_mixed_discriminant} that
vanishing of the Euler discriminant for $\cA$ is equivalent to 
a drop in the Euler characteristic of
the arrangement complement $\PP^{n-1} \backslash \mathcal{A}$.

\section{The Kernel of a Matrix over a Ring}
\label{sec:det_ideals}

We here start the proof of Theorem~\ref{thm:main}, which says that
 the maximal minors of the matrix in (\ref{eq:ourmatrix}) 
 generate the likelihood ideal $I_{\cal A}$. Our strategy is to apply constructions from commutative algebra \cite{eisenbud2013commutative},
namely
Fitting ideals and the Buchsbaum--Rim complex, to the matrices $Q$, $\Qi$, and
$\Qi^s$. In fact, the results of this section show more: both the minor ideal
and $I_\cA$ define
vector bundles and are Cohen--Macaulay. To complete the proof of
Theorem~\ref{thm:main}, 
 it suffices to show that the
 determinantal ideal $I_m(\Qi)$ is reduced, which we tackle in Section \ref{sec:multidegrees}.

The Buchsbaum--Rim complex is most typically used when a matrix, or the module
it presents, is close to generic.
The matrices $Q$, $\Qi$, and $\Qi^s$ are  far from being generic.
 However, they are ``generic enough'' for the Buchsbaum--Rim construction to be applicable.

We recall some material from Eisenbud's
 textbook  \cite[\S 20.7 and \S A2.6]{eisenbud2013commutative}.
Let $R$ be a commutative Noetherian ring with unit, and $M$ a finitely generated $R$-module, with presentation
\[ \qquad
F \stackrel{\phi}{\longrightarrow}G \longrightarrow M \longrightarrow 0,\,\,\, \mbox{ where }F \simeq R^f \mbox{ and }G \simeq R^g.  \]
We write $I_g(\phi)$ for the ideal generated by the $g \times g$ minors of $\phi$.
Here $f$ and $g$ are positive integers with $f\ge g$. This is
the standard notation used in~\cite{eisenbud2013commutative}.
Fitting's lemma yields:

\begin{lemma}\label{FitLemma}
There exists a positive integer $k$ such that
$\,\ann(M)^k \subseteq I_g(\phi) \subseteq \ann(M)$.
Moreover, the {\em Fitting ideal} $I_g(\phi)$ is independent of the choice of presentation
of the module~$M$.
\end{lemma}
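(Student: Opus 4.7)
The plan is to handle the two containments separately and then establish independence of presentation. The two workhorses are the classical adjugate identity and the Cauchy--Binet determinant formula; once these are in hand, everything reduces to bookkeeping.

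For the containment $I_g(\phi)\subseteq\ann(M)$, I fix a $g$-subset $J\subseteq[f]$ and let $\phi_J$ denote the resulting $g\times g$ submatrix with determinant $\Delta$. The adjugate identity $\operatorname{adj}(\phi_J)\,\phi_J=\Delta\cdot I_g$ shows that $\Delta\cdot e_i\in\phi_J(R^g)\subseteq\phi(F)$ for every standard basis vector $e_i$ of $G$. Since the $e_i$ generate $G$, multiplication by $\Delta$ vanishes on $M=G/\phi(F)$, hence $\Delta\in\ann(M)$.

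For $\ann(M)^k\subseteq I_g(\phi)$, given $r\in\ann(M)$, multiplication by $r$ on $G$ has image in $\phi(F)$, so it factors as $\phi\circ\psi$ for some $\psi\colon G\to F$. Representing $\psi$ by an $f\times g$ matrix $V$ with $\phi V=rI_g$ and applying Cauchy--Binet gives $r^g=\det(\phi V)=\sum_J\det(\phi_J)\det(V^J)$, a sum over $g$-subsets $J\subseteq[f]$; hence $r^g\in I_g(\phi)$. Because $R$ is Noetherian, $\ann(M)=(r_1,\ldots,r_t)$ is finitely generated, and a pigeonhole argument on monomials $r_1^{a_1}\cdots r_t^{a_t}$ with $\sum a_i\geq t(g-1)+1$ yields $\ann(M)^{t(g-1)+1}\subseteq I_g(\phi)$.

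For independence of presentation, the plan is to verify invariance of $I_g(\phi)$ under two operations that connect any two finite free presentations. Invertible changes of basis in $F$ or $G$ replace $\phi$ by $A\phi B$ with $A,B$ units; Cauchy--Binet then expresses each $g\times g$ minor of $A\phi B$ as an $R$-linear combination of those of $\phi$, and vice versa, so the ideal is preserved. A stabilization that adjoins a free summand to $G$ together with its trivial relation augments $\phi$ by an identity block, and expanding the new $(g+1)\times(g+1)$ minors along this block recovers $I_g(\phi)$. The main obstacle is the comparison step itself: reducing an arbitrary pair of finite free presentations to these two moves requires a Schanuel-type lemma showing that any two such presentations agree after suitable stabilization, and the bookkeeping there, though routine, is where the argument needs the most care.
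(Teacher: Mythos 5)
The paper gives no proof of this lemma --- it is quoted from Eisenbud [\S 20.7], so the comparison below is with the standard textbook argument. Your two containments are correct and complete: the adjugate identity gives $\det(\phi_J)\cdot G\subseteq \phi_J(R^g)\subseteq\phi(F)$, hence $I_g(\phi)\subseteq\ann(M)$; and the factorization $r\cdot\mathrm{id}_G=\phi\circ\psi$ plus Cauchy--Binet gives $r^g\in I_g(\phi)$ for each $r\in\ann(M)$, after which Noetherianity and pigeonhole produce a uniform exponent. (The classical sharper bound $k=g$ comes from applying the same factorization to $\operatorname{diag}(a_1,\dots,a_g)$ for $a_1,\dots,a_g\in\ann(M)$, but the statement only asks for some $k$, so your bound is fine.)

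The gap is in the independence-of-presentation part, which you correctly identify as the delicate step but do not carry out. Two concrete problems. First, your two moves do not generate the equivalence you need: basis changes preserve the ranks of $F$ and $G$, and your stabilization increases both by one, so for instance $\phi$ and $(\phi\mid 0)$ --- the same generators of $M$ with one redundant relation adjoined --- cannot be connected by these moves for size reasons alone. You must also allow replacing the columns of $\phi$ by any other finite generating set of the submodule $\phi(F)=\ker(G\to M)$; this move preserves $I_g$ because the new columns are $R$-combinations of the old, so $\phi'=\phi C$ and Cauchy--Binet gives $I_g(\phi')\subseteq I_g(\phi)$, with equality by symmetry. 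Second, even with the correct list of moves, the Schanuel-type comparison of arbitrary presentations is the entire content of the claim and is left unproved. The standard route avoids Schanuel altogether: having shown that $I_g$ depends only on the surjection $\pi\colon G\to M$, one compares two surjections through their common refinement $R^{g+g'}\to M$, adding one generator at a time. If the new generator is $\pi(u)$, the kernel of the enlarged surjection is generated by $\ker\pi\oplus 0$ together with $(u,-1)$, and expanding the $(g+1)\times(g+1)$ minors of $\begin{psmallmatrix}\phi & u\\ 0 & -1\end{psmallmatrix}$ along the last row recovers $I_g(\phi)$ exactly. Completing your argument along these lines would close the gap.
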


Our next ingredient is the {\em Buchsbaum--Rim complex}. We state this construction for characteristic zero.
In positive characteristic it is necessary to replace $\Sym$ with divided~powers. 

\begin{proposition}\label{BRcomplex}
The following sequence is a complex of $R$-modules:
\begin{equation}
        \label{eq:BuchsbaumRim}
       0 \longrightarrow \Sym^{f-g-1}G \otimes \bigwedge^f F \rightarrow \cdots \rightarrow G \otimes \bigwedge^{g+2} F \longrightarrow \bigwedge^{g+1}F\longrightarrow  F\stackrel{\phi}{\longrightarrow} G \longrightarrow M \longrightarrow 0.
    \end{equation}
    Writing $e_i$ for elements of $F$,
the first differential from $\bigwedge^{g+1}F$ to $F$ is the $R$-linear map
\[
e_1\wedge e_2 \wedge \cdots \wedge e_{g+1} \,\,\,\mapsto \,\,\,\sum\limits_{i=1}^{g+1} (-1)^i e_i \cdot \wedge^g(\phi)(e_1\wedge e_2 \wedge \cdots \wedge \widehat{e_i} \wedge \cdots \wedge e_{g+1}).
\]
Similar formulas for the higher differentials are found in \cite[\S A2.6]{eisenbud2013commutative}.
 This complex is exact if and only if the Fitting ideal $I_g(\phi)$ contains a regular sequence of length $f-g+1$.
\end{proposition}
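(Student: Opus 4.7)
The plan is to recognize the displayed sequence as the classical Buchsbaum--Rim complex, whose construction and acyclicity are expounded in \cite[\S A2.6]{eisenbud2013commutative}. Accordingly, there are two things to verify: (a) the sequence is a complex with the first differential given by the stated formula (and analogous formulas for the higher ones), and (b) exactness is equivalent to the stated depth condition on the Fitting ideal $I_g(\phi)$.

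For part (a), I would check directly that consecutive maps compose to zero. In the most visible case, the composition $\bigwedge^{g+1} F \to F \stackrel{\phi}{\to} G$ sends $e_1 \wedge \cdots \wedge e_{g+1}$ to
\[
\sum_{i=1}^{g+1} (-1)^i \phi(e_i) \cdot \wedge^g(\phi)(e_1 \wedge \cdots \wedge \widehat{e_i} \wedge \cdots \wedge e_{g+1}),
\]
which is the Laplace expansion along the first row of the determinant of the $(g+1) \times (g+1)$ matrix whose rows are $\phi(e_1), \ldots, \phi(e_{g+1})$. That determinant represents an element of $\bigwedge^{g+1} G$, and since $\rk G = g$ this exterior power is zero. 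The higher differentials compose to zero by analogous multilinear cancellations native to wedge and symmetric powers, and writing them out follows the standard bookkeeping given in Eisenbud.

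For part (b), I would invoke the Buchsbaum--Eisenbud acyclicity criterion \cite[Theorem 20.9]{eisenbud2013commutative}: a complex of free $R$-modules with differentials $\phi_i$ and term ranks $r_i$ is exact if and only if $\rk(\phi_i) = r_i - r_{i+1}$ at each step and $\operatorname{depth} I_{\rk(\phi_i)}(\phi_i) \geq i$. The ranks of the terms in (\ref{eq:BuchsbaumRim}) are explicit binomial expressions, and the required rank identities reduce to standard binomial sum manipulations. The key step—and the main technical hurdle—is to identify the Fitting ideal $I_{\rk(\phi_i)}(\phi_i)$ of each higher differential with an ideal having the same radical as $I_g(\phi)$; this requires unpacking the symmetric/wedge power matrices and recognizing their appropriately sized minors as polynomial expressions in the $g \times g$ minors of $\phi$.

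Granted this identification, the single hypothesis that $I_g(\phi)$ contains a regular sequence of length $f - g + 1$ (equivalently, has depth $\geq f - g + 1$) simultaneously supplies all the depth inequalities demanded by the acyclicity criterion, yielding exactness. For the converse, exactness of (\ref{eq:BuchsbaumRim}) makes the acyclicity criterion hold at each $\phi_i$, and in particular at the last nonzero differential it produces the claimed lower bound on $\operatorname{depth} I_g(\phi)$. The combinatorics of the Fitting ideal comparison is the part I expect to be most tedious; everything else is a packaged application of the standard theorem.
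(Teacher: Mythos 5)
The paper does not prove this proposition at all: it is stated as recalled material and delegated wholesale to \cite[\S A2.6]{eisenbud2013commutative}, so there is no internal proof to compare against. Your sketch is a correct outline of the standard argument that the citation points to, and both of your main ingredients (the multilinear vanishing for the complex property, the Buchsbaum--Eisenbud acyclicity criterion plus a Fitting-ideal comparison for exactness) are the right ones. Two small points of precision. First, your description of the composite $\bigwedge^{g+1}F\to F\to G$ is literally off: since $\phi(e_i)\in G\simeq R^g$, a matrix with rows $\phi(e_1),\dots,\phi(e_{g+1})$ is $(g+1)\times g$, not square. The correct bookkeeping is that the $j$-th coordinate of the composite is the Laplace expansion of the $(g+1)\times(g+1)$ matrix obtained from $\bigl(\phi(e_1)\mid\cdots\mid\phi(e_{g+1})\bigr)$ by adjoining a duplicate of its $j$-th row, which vanishes by the repeated-row rule; intrinsically, the composite is the image of $\phi(e_1)\wedge\cdots\wedge\phi(e_{g+1})\in\bigwedge^{g+1}G=0$ under comultiplication, which is the conclusion you reach anyway. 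Second, in the converse direction the acyclicity criterion hands you a depth bound on the ideal of minors of the \emph{last} differential, and to transfer it to $I_g(\phi)$ you need both the radical identification you flag and the fact that an ideal and its radical have the same grade in a Noetherian ring; that should be said explicitly. Neither point is a gap in substance, and the Fitting-ideal comparison you correctly identify as the technical crux is exactly what Eisenbud's treatment supplies.
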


We shall prove that the maximal minors of the matrices $\Qi$, and $\Qi^s$ contain long enough regular sequences so that their Buchsbaum--Rim complexes are in fact resolutions.
 The $\binom{m+n-1}{m+1}$ syzygies which generate $\ker(\Qi)$ are then given abstractly by Proposition~\ref{BRcomplex}. 
 
 \smallskip
 
 We prefer to write these syzygies in a combinatorial notation that is easy to understand.
Think of $\phi$ as a matrix with $g$ rows and $f$ columns. Every submatrix given by
$g+1$ columns has a distinguished vector in its kernel.
Linear algebra students learn this as {\em Cramer's~rule}.
The coordinates of that vector are the signed $g \times g$ minors of the submatrix.
For combinatorialists, these vectors correspond to the {\em circuits} of the matroid of $\phi$.
Indeed, the circuits in ${\rm ker}(\phi)$ are the
  non-zero vectors whose support is inclusion-minimal, and these span the kernel.
  This is familiar when $R$ is a field. 
  But, it actually works over any ring, assuming
the hypothesis in  Proposition \ref{BRcomplex} is satisfied.
We formulate the take-home message as follows:

  \begin{corollary}[Buchsbaum--Rim]
  The kernel of a matrix is generated by the circuits of~the matrix, provided
   the ideal of maximal minors contains a regular sequence that is long enough.
  \end{corollary}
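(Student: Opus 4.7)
The plan is to derive this corollary directly from Proposition~\ref{BRcomplex}. That proposition states that the Buchsbaum--Rim complex (\ref{eq:BuchsbaumRim}) is exact precisely when $I_g(\phi)$ contains a regular sequence of length $f-g+1$. Reading exactness at the position $F$ of (\ref{eq:BuchsbaumRim}), one obtains $\ker(\phi) = \im(\partial)$, where $\partial \colon \bigwedge^{g+1} F \to F$ is the first Buchsbaum--Rim differential. Thus all that remains is to identify $\im(\partial)$ with the submodule of $F$ generated by the circuit vectors of the matrix of $\phi$.

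For this identification I would fix the standard basis $e_1, \ldots, e_f$ of $F$, so that $\bigwedge^{g+1} F$ is freely generated over $R$ by the wedges $e_{i_1} \wedge \cdots \wedge e_{i_{g+1}}$ indexed by $(g+1)$-subsets of $\{1,\ldots,f\}$. Each such subset picks out $g+1$ columns of the $g \times f$ matrix of $\phi$. Applying the explicit formula for $\partial$ recorded in Proposition~\ref{BRcomplex}, the image of $e_{i_1} \wedge \cdots \wedge e_{i_{g+1}}$ in $F$ is
\[
\sum_{k=1}^{g+1} (-1)^{k}\, \det\!\bigl(\phi_{\{i_1,\ldots,\widehat{i_k},\ldots,i_{g+1}\}}\bigr) \cdot e_{i_k},
\]
where the subscript denotes the $g \times g$ submatrix of $\phi$ on the indicated columns. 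This is exactly the Cramer's rule solution to the linear system imposed by the $g \times (g+1)$ submatrix on the chosen columns: its support lies in those $g+1$ coordinates, and its entries are the signed maximal minors. In matroid language this is a circuit of the matroid of $\phi$ (or the zero vector, if those columns already lie in a strictly smaller dependent set).

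Since the wedges $e_{i_1} \wedge \cdots \wedge e_{i_{g+1}}$ generate $\bigwedge^{g+1} F$ over $R$, and $\partial$ is $R$-linear, it follows that $\im(\partial)$ is the $R$-submodule of $F$ generated by the circuit vectors ranging over all $(g+1)$-subsets of columns. Combined with the equality $\ker(\phi) = \im(\partial)$ from the first paragraph, this is the statement of the corollary.

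The main obstacle is conceptual rather than computational: one must be comfortable translating the abstract Buchsbaum--Rim differential into the classical Cramer construction, and then recognising the resulting vectors as matroid circuits rather than as pieces of a resolution. Once this dictionary is in place, and once one observes that exactness upgrades the inclusion ``circuits generate a submodule of $\ker(\phi)$'' to an equality, the corollary is immediate and no further input is needed beyond Proposition~\ref{BRcomplex}.
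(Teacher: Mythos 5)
Your proposal is correct and follows exactly the route the paper intends: it invokes exactness of the Buchsbaum--Rim complex at $F$ under the regular-sequence hypothesis of Proposition~\ref{BRcomplex}, and identifies the image of the first differential with the Cramer's-rule circuit vectors via the explicit formula given there. This matches the paper's own (informal) justification, so nothing further is needed.
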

    
  To encode vectors in $F \simeq R^f$, we introduce
  a vector $s = (s_1,s_2,\ldots,s_f)$ of formal variables,
  and we employ linear forms in $s$ with coefficients in $R$.
  The element of $F$ represented by~such a linear form is its
    gradient with respect to $s$.
    Let $\phi^s$ denote the matrix with $g+1$ rows and $f$ columns
    obtained by augmenting $\phi$ with the row vector $s$.
    With this encoding, we have:
    
\begin{remark}
  \label{rem:detsyz}
    The circuits of the matrix $\phi$ are the maximal minors of
    the augmented matrix $\phi^s$. These generate the kernel of $\phi$
    whenever a long enough regular sequence can be found.
\end{remark}
   
We now return to the task of proving Theorem~\ref{thm:main}. To make use of the full strength of Proposition~\ref{BRcomplex}, our next step is to analyze the matrices $Q$, $\Qi$ and $\Qi^s$ in more detail. We start by showing that the kernel of the matrix $Q$ defines a rank $n$ vector bundle on $\PP^{n-1}$.
Our assumption that intersections of the hypersurfaces $\V(f_i)$ are well-behaved will be crucial. 

\begin{lemma}
    \label{lem:rank_jacobian}
    Let  $\cA$ be SNC, $I \subset [m]$ and 
    $X_I := \V(f_i \mid i \in I) \backslash \bigcup\limits_{j \not \in I } \V(f_j)$.
 At any point $p\in X_I$, the rank of the Jacobian $J_I$ 
    of $\{f_i:i\in I\}$ satisfies $ \,  \rk(J_I(p)) =  {\rm min}(|I|,n)$.
\end{lemma}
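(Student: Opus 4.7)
My plan is to split on the size of $|I|$ and use Euler's identity together with the transversality content of the SNC hypothesis. The upper bound $\rk(J_I(p)) \le \min(|I|, n)$ is immediate from the shape of the matrix, so the work lies in the reverse inequality.

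The first ingredient is Euler's identity. For any $p \in X_I$ with affine lift $\tilde p \in \CC^n \setminus \{0\}$, applying the relation $\sum_j x_j\,\partial f_i/\partial x_j = d_i f_i$ at $\tilde p$ for each $i \in I$ gives $J_I(p)\,\tilde p = 0$. Thus $\tilde p \in \ker J_I(p)$, so $\rk(J_I(p)) \le n-1$ automatically, regardless of $|I|$. This already handles the case $|I| \ge n$: under SNC, there can be no point of $\PP^{n-1}$ where $n$ or more of the hypersurfaces simultaneously vanish, since transversal intersection of $|I|$ hypersurfaces at a point in $\PP^{n-1}$ forces $|I|$ to be at most $\dim \PP^{n-1} = n-1$. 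Hence $X_I = \emptyset$ and the claim is vacuous when $|I| \ge n$.

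The main case is $|I| \le n-1$. Since $p \in X_I$, the set of indices $i$ with $f_i(p) = 0$ is exactly $I$, and the SNC condition at $p$ says that the smooth hypersurfaces $\{\V(f_i) : i \in I\}$ intersect transversally at $p$; equivalently, the projective tangent hyperplanes $\PP(\ker \nabla f_i(p))$ meet in codimension $|I|$ inside $\PP^{n-1}$. Passing from $\PP^{n-1}$ back to $\CC^n$, this translates to $\ker J_I(p) = \bigcap_{i \in I} \ker \nabla f_i(p)$ having dimension $n - |I|$, the extra Euler direction $\tilde p$ being absorbed in the projectivization. Hence $\rk(J_I(p)) = |I| = \min(|I|, n)$.

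The main obstacle is the dimensional bookkeeping between the projective and affine kernels: because the Euler direction $\tilde p$ is always in $\ker J_I(p)$, one must carefully check that the codimension count in $\PP^{n-1}$ lifts correctly to $\CC^n$ to yield exactly $|I|$ rather than $|I|\pm 1$. An alternative route that avoids this bookkeeping is to apply the affine Jacobian criterion for smoothness directly to the affine cone $\V(f_i : i \in I) \subset \CC^n$ at $\tilde p$, whose smoothness of codimension $|I|$ is equivalent to the SNC condition for $\cA$ near $p$, and which feeds directly into the equality $\rk(J_I(\tilde p)) = |I|$.
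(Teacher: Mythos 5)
Your proof is correct and takes essentially the same route as the paper: the case $|I|\ge n$ is vacuous because SNC forces $X_I=\emptyset$, and for $|I|<n$ the SNC hypothesis makes the affine cone over $\V(f_i : i\in I)$ smooth of codimension $|I|$ at the lift $\tilde p$, so $\dim\ker J_I(p)=n-|I|$ and $\rk J_I(p)=|I|$. The Euler-identity observation is a harmless addition; the paper states the same tangent-space dimension count directly.
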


\begin{proof}
This follows from the SNC property.
    If $|I| \geq n$ then $X_I \subset \PP^{n-1}$ is empty.
Otherwise,
         \[
        \dim T_p(X_I) \,\,=\,\, \dim(X_I) \,\,=\,\, \dim \ker(J_I(p)) \,\,=\,\, n-| I |. \qedhere
    \]
\end{proof}

\begin{lemma}\label{lem:kerQ_const}
    With the assumptions of \Cref{lem:rank_jacobian}, 
    the matrices $Q$ and $\Qi$ have constant rank
as        $p$ ranges over all of $\,\PP^{n-1}$.
The kernels of  both matrices are vector bundles on $\PP^{n-1}$.
\end{lemma}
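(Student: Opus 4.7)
The approach is pointwise: evaluate $Q$ and $\Qi$ at an arbitrary closed point $p \in \PP^{n-1}$ and show both have rank exactly $m$, independent of $p$. Constant pointwise rank of a morphism between vector bundles on $\PP^{n-1}$ suffices to make the kernel locally free, which is the conclusion of the lemma.

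Fix $p \in \PP^{n-1}$ and set $I_0 := \{i \in [m] : f_i(p) = 0\}$, so that $p \in X_{I_0}$. The SNC hypothesis forces $X_I = \emptyset$ when $|I| \geq n$ (as already observed in the proof of \Cref{lem:rank_jacobian}), so $|I_0| \leq n - 1$. For $Q(p)$, the nonzero diagonal entries $\{f_i(p)\}_{i \notin I_0}$ yield $m - |I_0|$ pivots in the first $m$ columns; after clearing, the residual rank comes from the Jacobian rows indexed by $I_0$, which contribute $\rk(J_{I_0}(p)) = |I_0|$ by \Cref{lem:rank_jacobian}. Summing, $\rk(Q(p)) = m$ for every $p$.

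For $\Qi(p)$ the same reduction applies, but now row $1$ has lost its diagonal entry, so the residual rank equals $\rk(J_{\{1\}\cup I_0}(p))$. When $1 \in I_0$ this is precisely $J_{I_0}(p)$ and \Cref{lem:rank_jacobian} applies verbatim. The delicate case, and the main obstacle of the proof, is $1 \notin I_0$: here $p \notin X_{\{1\}\cup I_0}$, so \Cref{lem:rank_jacobian} is not directly available and one must show separately that $\nabla f_1(p)$ is independent of $\{\nabla f_i(p) : i \in I_0\}$. The key is Euler's identity $\sum_j x_j\, \partial f_i/\partial x_j = d_i f_i$: a relation $\nabla f_1(p) = \sum_{i \in I_0} \lambda_i\, \nabla f_i(p)$, contracted against any affine representative $(x_1(p),\ldots,x_n(p))$ of $p$, would give $d_1 f_1(p) = \sum_{i \in I_0} \lambda_i d_i f_i(p) = 0$, contradicting $f_1(p) \neq 0$. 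Hence $\rk(J_{\{1\}\cup I_0}(p)) = 1 + |I_0| \leq n$, and $\rk(\Qi(p)) = m$ in this case as well.

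Since $Q$ and $\Qi$ induce morphisms of vector bundles on $\PP^{n-1}$ (after the obvious twists) whose evaluations have constant rank $m$ at every point, their kernels and images are locally free. In particular $\ker(Q)$ is a vector bundle of rank $n$ and $\ker(\Qi)$ is a vector bundle of rank $n - 1$, establishing the lemma.
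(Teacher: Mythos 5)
Your proof is correct and takes essentially the same route as the paper: a pointwise rank count in which the nonvanishing diagonal entries serve as pivots and \Cref{lem:rank_jacobian} supplies the rank of the remaining Jacobian rows. The one genuine difference is the treatment of $\Qi$. You handle the deleted column by a pointwise argument, showing via Euler's identity that $\nabla f_1(p)$ remains independent of $\{\nabla f_i(p) : i \in I_0\}$ when $f_1(p)\neq 0$; the paper instead observes once and for all that $(d_1,\ldots,d_m,-x_1,\ldots,-x_n)\in\ker(Q)$ --- the global form of the same Euler relation --- so the first column of $Q$ is a combination of the others with unit coefficient $d_1$, and $\coker(Q)\simeq\coker(\Qi)$, whence the ranks agree at every point with no case distinction. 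Both arguments rest on the same identity; the paper's packaging is a one-liner, while yours makes the cases $1\in I_0$ and $1\notin I_0$ explicit and is equally valid.
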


\begin{proof}
    If $p \in \PP^{n-1}\backslash \V(f)$, then $Q(p)$ is in row-echelon form with nonzero pivots.
    Its kernel has dimension $n$ since    $\rk(Q(p)) = m$.
    If $p\in \V(f)$, upon reordering the $f_i$, we may assume that $p\in X_I$ for $I = \{1,\ldots,i\}$
    with $i < n$ and
     $X_I$ as in Lemma~\ref{lem:rank_jacobian}. We divide $Q$ into blocks,
    \[        Q \,=\, \begin{pmatrix}
            Q_{11} & Q_{12} & Q_{13} & Q_{14} & Q_{15} \\
            Q_{21} & Q_{22} & Q_{23} & Q_{24} & Q_{25} \\
            Q_{31} & Q_{32} & Q_{33} & Q_{34} & Q_{35} \\
        \end{pmatrix}, \]
    where the column blocks have sizes $(i, n-i, m-n, i, n-i)$, while the row blocks have sizes $(i,n-i,m-n)$. The assumption $p\in X_I$ implies that, after substituting $p$, we have
    \[
        Q_{11}(p) \,=\, Q_{12}(p) \,=\, Q_{13}(p)\, =\, Q_{21}(p)\, =\, Q_{23}(p)\, = \,Q_{31}(p)\, =\, Q_{32}(p) \,\,=\,\, 0.
    \] 
    Moreover, $Q_{22}(p)$ and $Q_{33}(p)$ are diagonal matrices with non-zero diagonal entries. By \Cref{lem:rank_jacobian}, the $i\times n$ matrix $\bigl(Q_{14}(p)\mid Q_{15}(p)\bigr)$ defining the Jacobian $J_I(p)$ has rank $i$. After reordering variables, we may assume that $Q_{14}(p)$ is an invertible matrix; using it we may row reduce so that $Q_{24}(p) = Q_{34}(p)=0$. Similarly, we may assume that $Q_{25}(p)$ is invertible and $Q_{35}(p) = 0$. Again, we conclude $\rk(Q(p)) = m$ and thus $\dim(\ker Q(p)) = n$.

    We pass from $Q$ to $\Qi$ by deleting the first column. The key here is to notice that 
    \[
    \coker(Q) \,\,\simeq \,\, \coker(\Qi).
    \] 
    This holds because $(d_1,d_2,\ldots,d_m,-x_1,\ldots,-x_n)$
    is in the kernel of $Q$. Since $d_1$ is a positive integer, it is a unit in $R$, and removing
the first column of $Q$ does not change the cokernel. 
\end{proof}

\begin{remark}\label{rem:defK}
We noted in the introduction
  that the likelihood ideal always contains the $(1,0)$-form $\sum d_is_i$. Passing from $Q$ to $\Qi$ lets us focus on the generators of the likelihood ideal of degree $(1,a)$ with $a > 0$. From now on, we write $K$ for the ideal $I_{m+1}(\Qi^s)+\langle 
  \sum_i d_is_i \rangle$. 
\end{remark}
Combining the results above, we arrive at the following conclusion, which is also obtained in \cite{TohaneanuEtAl2025}, via a different method.

\begin{proposition}\label{circuitSyzgenerate}
If $\cA$ is SNC 
     then the Buchsbaum--Rim complex for $\Qi$ is~exact. 
\end{proposition}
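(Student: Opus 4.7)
The plan is to apply Proposition~\ref{BRcomplex} directly to $\phi = \Qi$. This matrix has $g = m$ rows and $f = m+n-1$ columns, so the criterion of Proposition~\ref{BRcomplex} asks that the Fitting ideal $I_m(\Qi) \subseteq R = \CC[x_1,\ldots,x_n]$ contain a regular sequence of length $f-g+1 = n$. Since $R$ is a Cohen--Macaulay ring of Krull dimension $n$, the existence of such a regular sequence is equivalent to $I_m(\Qi)$ having height $n$; and since $\dim R = n$, this in turn amounts to the affine variety $V(I_m(\Qi)) \subseteq \mathbb{A}^n$ being contained in the origin $\{0\}$.

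The substantive step is to verify that no nonzero $p \in \mathbb{A}^n$ lies in $V(I_m(\Qi))$, and this is where the SNC hypothesis enters through Lemma~\ref{lem:kerQ_const}. That lemma guarantees that at every $[p] \in \PP^{n-1}$, the matrix $\Qi$ has full row rank $m$, so some $m \times m$ minor of $\Qi$ does not vanish at $[p]$. The row $i$ of $\Qi$ consists of homogeneous polynomials of a single degree ($d_i$ in the first $m-1$ columns and $d_i - 1$ in the last $n$), so rescaling $p$ by a scalar only rescales each row of $\Qi(p)$ by a corresponding power of the scalar; in particular, rank, and the non-vanishing of any given minor, is preserved. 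Hence some maximal minor is nonzero at $p$, so $p \notin V(I_m(\Qi))$, completing the height computation and giving the required regular sequence.

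No serious obstacle stands out: the hard input—constant rank of $\Qi$ on all of $\PP^{n-1}$ under the SNC assumption—has already been packaged in Lemma~\ref{lem:kerQ_const}, and the remaining translation to a codimension statement for the Fitting ideal is immediate from the homogeneity of the entries. The one point requiring a small amount of care is precisely this passage from the projective rank statement to an affine vanishing-locus statement, but the row-by-row homogeneity makes it entirely routine.
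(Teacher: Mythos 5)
Your proof is correct and takes essentially the same route as the paper's: both reduce exactness to producing a regular sequence of length $n$ in $I_m(\Qi)$ and derive it from the constant-rank statement of Lemma~\ref{lem:kerQ_const}, the paper via Fitting's lemma and the explicit sequence $x_1^N,\ldots,x_n^N$ for $N\gg 0$, you via the equality of grade and height in the Cohen--Macaulay ring $R$. One small wording correction: a row of $\Qi$ is not homogeneous of a single degree (the entry $f_i$ has degree $d_i$ while the Jacobian entries have degree $d_i-1$), but each maximal minor is nonetheless homogeneous in $x$, so your passage from the projective rank statement to $V(I_m(\Qi))\subseteq\{0\}$ is still valid.
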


\begin{proof}
By Lemma~\ref{lem:kerQ_const}, the cokernel of $\Qi$ is supported on the irrelevant ideal of $\PP^{n-1}$.
Hence
\[
\sqrt{\ann \coker(\Qi)} \,\,=\,\,\langle x_1, \ldots, x_n\rangle.
\]
By Lemma~\ref{FitLemma}, this means that also 
\[\sqrt{I_m(\Qi)} \,\,=\,\,\langle x_1, \ldots, x_n\rangle.\] Therefore $I_m(\Qi)$ contains a regular sequence of length $(m+n-1)-m +1 =n$,
namely $x_1^N,\ldots,x_n^N$ for  $N \gg 0$. This ensures 
 the exactness of the Buchsbaum--Rim complex. 
\end{proof}
\begin{lemma}\label{lem:minorsIdealVB}
The ideal
    $I_{m+1}(\Qi^s)$ has codimension $n-1$ in $\CC[s_2,\ldots,s_m,x_1,\ldots,x_n]$. It defines a rank $m-n$ vector bundle on $\PP^{n-1}$. The variety $\,\V(K)$ is irreducible of codimension~$n$. 
\end{lemma}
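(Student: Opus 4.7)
The plan is to use the circuit-syzygy description of $I_{m+1}(Q^s_{\backslash 1})$ together with the vector-bundle structure of $\ker(Q_{\backslash 1})$ from Lemma \ref{lem:kerQ_const}. By Remark \ref{rem:detsyz} combined with Proposition \ref{circuitSyzgenerate}, the maximal minors of $Q^s_{\backslash 1}$ are precisely the values of the linear form $\Psi=(s_2,\ldots,s_m,0,\ldots,0)$ paired with the circuit generators of $\ker(Q_{\backslash 1})$. For each $x\in\PP^{n-1}$, the fiber of $\V(I_{m+1}(Q^s_{\backslash 1}))$ over $x$ in the $s$-coordinates is therefore the annihilator in $\CC^{m-1}$ of the image of the projection $\pi_x\colon\ker(Q_{\backslash 1}(x))\to\CC^{m-1}$ onto the first $m-1$ coordinates.

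Next, I would argue that $\pi_x$ is injective at every $x\in\PP^{n-1}$. Its kernel consists of the $v\in\ker(Q_{\backslash 1}(x))$ supported on the last $n$ coordinates, which correspond bijectively to elements of $\ker J(x)$ for the Jacobian $J(x)=(\partial f_i/\partial x_j)$. Under SNC this Jacobian has full column rank $n$: on the arrangement this follows from the block decomposition of $Q(x)$ together with Lemma \ref{lem:rank_jacobian} exactly as in the proof of Lemma \ref{lem:kerQ_const}, and globally via the non-vanishing of the Euler discriminant. With $\pi_x$ injective and $\ker(Q_{\backslash 1})$ a rank-$(n-1)$ vector bundle (Lemma \ref{lem:kerQ_const}), each fiber is a linear subspace of $\CC^{m-1}$ of codimension exactly $n-1$. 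Hence $\V(I_{m+1}(Q^s_{\backslash 1}))$ is the total space of a rank-$(m-n)$ vector bundle over the irreducible base $\PP^{n-1}$, so it is irreducible of dimension $m$, equivalently of codimension $n-1$ in $\CC^{m+n-1}$.

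Finally, to treat $\V(K)\subset\CC^{m+n}$: since the coefficient of $s_1$ in $\sum_id_is_i$ is $d_1\neq0$, one solves linearly for $s_1$, and the projection $\CC^{m+n}\to\CC^{m+n-1}$ forgetting $s_1$ restricts to an isomorphism $\V(K)\xrightarrow{\sim}\V(I_{m+1}(Q^s_{\backslash 1}))$. Thus $\V(K)$ inherits irreducibility and dimension $m$, placing it at codimension $n$ in $\CC^{m+n}$. The hard part will be the global constant-rank claim for $\pi_x$ at every $x\in\PP^{n-1}$: controlling the Jacobian away from the arrangement is where the global strength of SNC, i.e.\ the non-vanishing of the Euler discriminant (cf.\ Theorem \ref{thm:Euler_discriminant_equals_mixed_discriminant}), is essential, since the proof of Lemma \ref{lem:kerQ_const} only handles points lying on $\cA$ directly.
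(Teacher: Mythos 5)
Your fiberwise reduction is correct and is essentially the paper's own computation in disguise: since $\rk \Qi(x)=m$ everywhere (Lemma \ref{lem:kerQ_const}), the fiber of $\V(I_{m+1}(\Qi^s))$ over $x$ is the annihilator of $\pi_x(\ker \Qi(x))$, and because $\ker\pi_x\simeq\ker J_f(x)$ its codimension in $\CC^{m-1}$ is exactly $\rk J_f(x)-1$. The genuine gap is the step you yourself flag as ``the hard part'': injectivity of $\pi_x$ at \emph{every} $x$, i.e.\ $\rk J_f(x)=n$ for all $x\in\PP^{n-1}$. This does not follow from SNC and is false in general. SNC constrains the Jacobians $J_I$ only along the strata of $\cA$; at points off $\cA$ it says nothing about $\rk J_f$. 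The locus $\{x:\rk J_f(x)\le n-1\}$ has expected codimension $m-n+1$, which is at most $n-1$ whenever $m\le 2n-2$, so it is typically nonempty. Concretely, for four generic conics $f_i=x^TA_ix$ in $\PP^2$, the condition $\rk J_f(p)\le 2$ means $v^TA_ip=0$ for some $v\ne0$ and all $i$; the four corresponding $(1,1)$-divisors in $\PP^2\times\PP^2$ meet in class $(\sigma+\tau)^4=6\,\sigma^2\tau^2\ne0$, giving six points $p$, generically off $\cA$, where the fiber jumps from dimension $m-n=1$ to $2$. So $\pi_x$ is not injective everywhere and the map $\V(I_{m+1}(\Qi^s))\to\PP^{n-1}$ is not equidimensional in its fibers; appealing to ``the non-vanishing of the Euler discriminant'' cannot close this.

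What does survive, and what you should argue instead, are the codimension and irreducibility claims: if $\{\rk J_f\le r\}$ has codimension at least $(m-r)(n-r)>n-r$ for $r<n$, then the locus lying over the jump strata has dimension strictly less than $(n-1)+(m-n)$, so $\V(I_{m+1}(\Qi^s))$ is the closure of the rank-$(m-n)$ subbundle over the dense open set where $\rk J_f=n$, hence irreducible of codimension $n-1$; this still leaves you to justify the bound on the degeneracy loci for SNC (not merely generic) arrangements. Your final step, eliminating $s_1$ via $\sum_i d_is_i$ to pass to $K$, is fine and matches the paper. For what it is worth, the paper's proof of Lemmas \ref{lem:kerQ_const} and \ref{lem:minorsIdealVB} makes the same tacit full-rank assumption (``we may assume that $Q_{25}(p)$ is invertible''), which is harmless for the rank of $Q(p)$ but not for the fiber computation here; so you have correctly isolated the delicate point of this lemma, but the proposal does not resolve it, and the literal vector-bundle assertion cannot be resolved as stated.
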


\begin{proof}
    The proof is similar to that of Lemma~\ref{lem:kerQ_const}. We adopt the same notation,  except     
    that we work with $\Qi^s$, so $p\in X_I$ means $p\in \V(f_2, \ldots, f_i) \backslash \! \cup_{j>i}\V(f_j)$.
    We divide $\Qi^s$ into blocks as in Lemma~\ref{lem:kerQ_const}.
    Substituting $p$ into $\Qi^s$ and simplifying as in Lemma~\ref{lem:kerQ_const} yields the matrix
    \[
        \Qi^s(p) \,\,=\,\, \begin{pmatrix}
            0_{i \times (i-1)} & 0_{i \times (n-i)} & 0_{i \times (m-n)} & {\rm Id}_i & 0_{i \times (n-i)}  \\
            0_{(n-i) \times (i-1)} & {\rm Id}_{n-i} & 0_{(n-i) \times (m-n)}  & 0_{(n-i) \times i}  & {\rm Id}_{n-i} \\
            0_{(m-n) \times (i-1)} & 0_{(m-n) \times (n-i)}  & {\rm Id}_{m-n} & 0_{(m-n) \times i}  & 0_{(m-n) \times (n-i)}  \\
            [s_2,\dotsc, s_i] &[s_{i+1},\dotsc, s_n]& [s_{n+1},\dotsc, s_m]& 0_{1 \times i} & 0_{1 \times (n-i)}  
        \end{pmatrix}.
    \]
    Note that the number of columns has dropped by one, and the number of rows increased by one.
     Hence the column blocks have sizes $(i-1, n-i, m-n, i, n-i)$, and the row blocks have sizes $(i,n-i,m-n,1)$, respectively. A short computation shows $I_{m+1}(\Qi^s(p)) = \langle s_2,\ldots ,s_n\rangle$. 
    
  At first glance this seems to define a trivial bundle, but this is a reflection of the modifications made to $\Qi^s(p)$
    in order to simplify our local calculation. We conclude that the ideal $I_{m+1}(\Qi^s)$
    has codimension $n-1$ in $ \CC[s_2,\ldots,s_m,x_1,\ldots,x_n]$  and that $\V(I_{m+1}(\Qi^s))$ defines a vector bundle on $ \PP^{n-1}$, so is irreducible. Since $s_1$ does not appear in $\Qi^s$, adding  $\sum d_is_i$ to $I_{m+1}(\Qi^s)$ yields an ideal of codimension $n$ in the larger ring $S$, and concludes the proof. 
    \end{proof}

\begin{corollary}\label{cor:minorsEN}
$I_{m+1}(\Qi^s)$ has an Eagon--Northcott resolution, so it is Cohen--Macaulay.
\end{corollary}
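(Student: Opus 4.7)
The plan is to invoke the standard Eagon--Northcott theorem, for which Lemma~\ref{lem:minorsIdealVB} has already supplied the only nontrivial hypothesis. Recall that $\Qi^s$ is a matrix of size $(m+1) \times (m+n-1)$, so the ideal $I_{m+1}(\Qi^s)$ of its maximal minors has the generic (i.e.\ maximal possible) codimension equal to $(m+n-1)-(m+1)+1 = n-1$. By Lemma~\ref{lem:minorsIdealVB}, this is exactly the codimension realized in $\CC[s_2,\ldots,s_m,x_1,\ldots,x_n]$.

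Given this equality of codimensions, one applies the Eagon--Northcott theorem, as presented for instance in \cite[Theorem A2.60]{eisenbud2013commutative}: when the ideal of maximal minors of a $p\times q$ matrix ($p\le q$) over a Noetherian ring attains the generic codimension $q-p+1$, the Eagon--Northcott complex associated to the matrix is acyclic and provides a free resolution of the cyclic module $R/I_p(\phi)$. In our case, this produces an explicit length $n-1$ resolution of $R/I_{m+1}(\Qi^s)$ by free modules built from symmetric and exterior powers of the source and target of $\Qi^s$.

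Once such a resolution of length equal to the codimension is in hand, Cohen--Macaulayness is immediate: the projective dimension equals the grade, so $I_{m+1}(\Qi^s)$ is a perfect ideal, and the Auslander--Buchsbaum formula then forces $R/I_{m+1}(\Qi^s)$ to be Cohen--Macaulay. There is essentially no obstacle here beyond verifying that the codimension hypothesis is met, which is precisely the content of Lemma~\ref{lem:minorsIdealVB}; the result is a direct quotation of the Eagon--Northcott theorem in the "maximal codimension" regime.
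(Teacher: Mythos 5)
Your proof is correct and takes essentially the same route as the paper: both reduce the claim to checking that $I_{m+1}(\Qi^s)$ attains the generic grade $n-1$ and then quote the exactness criterion for the Eagon--Northcott complex, with Cohen--Macaulayness following from perfection. The only cosmetic difference is that the paper extracts an explicit regular sequence of length $n-1$ by arguing as in Proposition~\ref{circuitSyzgenerate}, whereas you pass through the codimension computed in Lemma~\ref{lem:minorsIdealVB}; this is equivalent here because the ambient polynomial ring is Cohen--Macaulay, so codimension equals grade.
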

\begin{proof}
Arguing exactly as in the proof of Proposition~\ref{circuitSyzgenerate} shows that $I_{m+1}(\Qi^s)$ contains a regular sequence of length $(m+n-1)-(m+1)+1 = n-1$. Hence, by \cite[Theorem A2.10]{eisenbud2013commutative}, the Eagon--Northcott complex is exact and the Cohen--Macaulay property follows.
\end{proof}

We now reconnect with likelihood geometry.
The ideal $I_\mathcal{A}$ of the likelihood correspondence may be described as follows: fix the localization $S_f$ of $S$ at 
the product $f=f_1 f_2 \cdots f_m$ which defines $\mathcal{A}$.
The partial derivatives of the log-likelihood function
generate the prime~ideal
\begin{equation}
\label{eq:idealwithdenominators}
\biggl\langle \,\sum_{i=1}^m \frac{s_i }{f_i} \cdot \frac{\partial f_i}{\partial x_j} \,\,:
\,\, j = 1,2,\ldots,n \,\biggr\rangle \,\,\,\, \subset \,\,\,S_f.
\end{equation}
Then $I_\mathcal{A}$ is the intersection of (\ref{eq:idealwithdenominators})
with the polynomial ring $S = \CC[s_1,\ldots,s_m,x_1,\ldots,x_n]$.

\begin{lemma}~\label{lem:localEqual}
The ideal of $S_f$ that is generated by the maximal minors of $\Qi^s$ equals~(\ref{eq:idealwithdenominators}).
\end{lemma}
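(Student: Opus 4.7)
The plan is to work inside $S_f$, where each $f_i$ is a unit, and apply row operations to $\Qi^s$ that preserve the ideal of maximal minors, eventually producing a matrix whose last row displays the log-derivatives $D_j := \sum_i (s_i/f_i)\,\partial f_i/\partial x_j$ directly. Concretely I will (i)~scale row $i$ by $1/f_i$ for each $i=2,\ldots,m$, which rescales the ideal of maximal minors by the unit $\prod_{i=2}^m f_i$ and so leaves it unchanged; (ii)~for each $i=2,\ldots,m$, subtract $s_i$ times the scaled row $i$ from the $s$-row; and (iii)~subtract $(s_1/f_1)$ times row $1$ from the $s$-row. Steps~(ii)--(iii) add multiples of one row to another and so preserve every maximal minor exactly. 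After these operations the last row becomes $(0,\ldots,0,-D_1,-D_2,\ldots,-D_n)$ with zeros in the first $m-1$ columns.

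For the inclusion $I_{m+1}(\Qi^s)\cdot S_f \subseteq (\ref{eq:idealwithdenominators})$, I will Laplace-expand any maximal minor along this new $s$-row: since its first $m-1$ entries are zero, every nonzero term carries a factor of some $-D_j$, so the minor lies in $(D_1,\ldots,D_n)\cdot S_f$. Euler's identity $\sum_j x_j D_j = \sum_i d_i s_i$ additionally places $\sum d_i s_i$ in the right-hand ideal, a fact that will be needed for the reverse direction.

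For the reverse inclusion I will single out the $\binom{n}{2}$ maximal minors that keep all $m-1$ identity columns and any two Jacobian columns $j_1 < j_2$. After the row operations these equal (up to a sign and the unit $\prod_{i=2}^m f_i$) $\alpha_{j_1} D_{j_2} - \alpha_{j_2} D_{j_1}$, where $\alpha_j := \partial f_1/\partial x_j$. Combined with $\sum_j x_j D_j = \sum_i d_i s_i$, one has $\binom{n}{2}+1$ elements of our ideal whose coefficient vectors on $(D_1,\ldots,D_n)$ are the $\binom{n}{2}$ vectors $\alpha_{j_1} e_{j_2} - \alpha_{j_2} e_{j_1}$ (spanning the orthogonal complement of $\nabla f_1$) together with the Euler vector $(x_1,\ldots,x_n)$. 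The $n\times n$ minors of this coefficient matrix evaluate to $\pm\,\alpha_k \cdot d_1 f_1$ for $k=1,\ldots,n$, using $(x_1,\ldots,x_n)\cdot \nabla f_1 = d_1 f_1$ from Euler's formula. Since $d_1 f_1$ is a unit in $S_f$, the ideal of these $n\times n$ minors equals $(\alpha_1,\ldots,\alpha_n)\cdot S_f$; and the same identity $d_1 f_1 = \sum_j x_j \alpha_j$ together with $f_1 \in S_f^{\times}$ forces this to be the unit ideal of $S_f$. By Cramer's rule each $D_j$ is recovered as an $S_f$-combination of the chosen minors and $\sum_i d_i s_i$.

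The main obstacle is the rank argument in the reverse direction: one must choose exactly the ``two-identity'' family of minors and pair them with the Euler relation so that the coefficient matrix attains rank $n$ over $S_f$. Euler's identity enters twice here---once to show $(x_1,\ldots,x_n)$ escapes the hyperplane $\nabla f_1^{\perp}$ with inner product the unit $d_1 f_1$, and once to promote the Jacobian ideal $(\alpha_1,\ldots,\alpha_n)$ to the unit ideal in $S_f$. Everything else reduces to routine bookkeeping with row operations and Laplace expansion.
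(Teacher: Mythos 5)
Your row operations, the Laplace expansion giving $I_{m+1}(\Qi^s)\,S_f\subseteq(\ref{eq:idealwithdenominators})$, and the identification of the ``all identity columns plus two Jacobian columns'' minors with $\alpha_{j_1}D_{j_2}-\alpha_{j_2}D_{j_1}$ are all correct, and your route genuinely differs from the paper's: the paper reinstates the deleted first column, divides row $i$ by $f_i$, observes that each $D_j$ is itself a maximal minor of the augmented matrix (columns $1,\dots,m,m+j$), and gets the forward inclusion from the circuit/kernel description of $\ker(\,\mathrm{Id}_m\mid A\,)$; you instead stay with $\Qi^s$ throughout and recover the $D_j$ by Cramer's rule from the pairwise combinations together with the Euler relation.

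There is, however, a genuine gap in your reverse inclusion: you use $\sum_jx_jD_j=\sum_id_is_i$ as one of the ``$\binom{n}{2}+1$ elements of our ideal,'' but you only ever establish that this element lies in the right-hand ideal $(\ref{eq:idealwithdenominators})$, never that it lies in $I_{m+1}(\Qi^s)\,S_f$ --- and in fact it does not. For $n=2$, $m=3$ the matrix $\Qi^s$ is square and its unique maximal minor equals $-f_2f_3(\alpha_1D_2-\alpha_2D_1)$, which generates a principal (codimension-one) ideal of $S_f$, whereas $(D_1,D_2)\ni\sum_id_is_i$ has codimension two; more generally $V(I_{m+1}(\Qi^s))$ imposes no condition on the free coordinate $s_1$, so the linear form $\sum_id_is_i$ cannot vanish on it. Consequently your Cramer step only yields $D_j\in I_{m+1}(\Qi^s)\,S_f+\bigl\langle\sum_id_is_i\bigr\rangle$, i.e.\ you prove $K\cdot S_f=(\ref{eq:idealwithdenominators})$ with $K$ as in Remark~\ref{rem:defK}. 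That corrected statement is exactly what Proposition~\ref{prop:idealSummary} uses downstream, and the paper's own proof leans on the same unproved assertion (that the linear form ``is contained in both ideals''); but as a proof of the lemma as literally worded, the missing membership $\sum_id_is_i\in I_{m+1}(\Qi^s)\,S_f$ is a real obstruction rather than a bookkeeping detail, and you should either adjoin the linear form explicitly or restate the claim for $K$.
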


\begin{proof}
We first note that the linear form $\sum_{i=1}^m d_i s_i$ is contained in both ideals.
We can therefore modify $\Qi^s$ by adding the missing first column  and by dividing the $i^{th}$ row by $f_i$:
\begin{equation}
\label{eq:Fmatrix}
\tilde{Q^s} \,\,\,= \,\,\,
     \begin{pmatrix}
        \, f_1^{-1} & 0 & \cdots & 0 & 0 \\
        \, 0 & f_2^{-1} & \cdots & 0 & 0 \\
        \, \vdots & \vdots & \ddots & \vdots & \vdots \\
        \, 0 & 0 & \cdots & f_m^{-1} & 0 \\
        \, 0 & 0 & \cdots & 0 & 1 
        \end{pmatrix} 
        \begin{pmatrix}
        \, f_1 & 0 & \cdots & 0 & \frac{\partial f_1}{\partial x_1} & \frac{\partial f_1}{\partial x_2} & \cdots & \frac{\partial f_1}{\partial x_n\ } \smallskip
\\
 \, 0 &       f_2 & \dots & 0 & \frac{\partial f_2}{\partial x_1} & \frac{\partial f_2}{\partial x_2} & \cdots & \frac{\partial f_2}{\partial x_n\ } \smallskip \\
   \,    0 & \vdots & \ddots & \vdots & \vdots & \vdots & \ddots & \vdots \smallskip \\
\,         0 & 0 & \cdots & f_m & \frac{\partial f_m}{\partial x_1} & \frac{\partial f_m}{\partial x_2} & \cdots & \frac{\partial f_m}{\partial x_n\ }  \smallskip \\
\, s_1 & s_2 & \cdots & s_m & 0 & 0 & \cdots & 0
    \end{pmatrix}.
\end{equation}
We claim that (\ref{eq:idealwithdenominators}) coincides with the ideal 
generated by the maximal minors of $\tilde{Q^s}$. One inclusion
follows from the observation that
the maximal minor of $\tilde{Q^s}$ with column indices $1,2,\ldots,m$ and $m+j$
is precisely  the $j^{th}$ generator in (\ref{eq:idealwithdenominators}), after
multiplication by $-1$.

For the converse, we return to the general linear algebra setting around
  Remark~\ref{rem:detsyz}.
Consider an $n \times (m+n)$ matrix that has the form
$P = \bigl( \,{\rm Id}_{m}\,| \, A \,\bigr)$,
where $A$ is any $m \times n $ matrix with entries in a ring $R$.
We claim that the kernel of $P$ is the free $R$-module
generated by the rows of the $n \times (m+n)$ matrix
$Q = \bigl( \,-A^T \,|\, \,{\rm Id}_{n}  \,\bigr)$.
Indeed, we clearly have $P \cdot Q^T = 0$. Moreover,
 if $(u,v)^T$ is any vector in the kernel of $P$, then
$u + Av = 0$ and hence $(u,v)^T = Q^Tv$.

We apply the previous paragraph to  $R = S_f$
where $P$ consists of the first $m$ rows of
$ \tilde{Q^s}$. Any kernel vector $(u,v)$
 is an $R$-linear combination
of the $n$ row vectors of $Q$. These are
$$ \biggl(
-\frac{1}{f_1} \frac{\partial f_1}{\partial x_j},
-\frac{1}{f_2} \frac{\partial f_2}{\partial x_j},
\,\ldots\,,
-\frac{1}{f_m} \frac{\partial f_m}{\partial x_j},
\,0,\ldots,0,1,0,\ldots ,0 \biggr) \quad
{\rm for}\,\,\, j=1,2,\ldots,n.
$$
In particular, every circuit syzygy of the first $m$ rows of $\tilde{Q^s}$ has this property.
We conclude that every maximal minor of $\tilde{Q^s}$ is an $S_f$-linear
combination of the generators in (\ref{eq:idealwithdenominators}).
\end{proof}

The following proposition summarizes the results we have proved in this section.

\begin{proposition}\label{prop:idealSummary}
    The ideal $K$ is Cohen--Macaulay of codimension $n$, and $\sqrt{K} = I_{\cal A}$.
\end{proposition}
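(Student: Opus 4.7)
The plan is to assemble the preceding lemmas. The proposition splits into two assertions: Cohen--Macaulayness of $K$ with codimension $n$, and the radical identity $\sqrt{K} = I_\cA$.

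For the first assertion, Lemma~\ref{lem:minorsIdealVB} already shows that $\V(K)$ has codimension $n$ in $\Spec S$. For the Cohen--Macaulay property, Corollary~\ref{cor:minorsEN} equips $I_{m+1}(\Qi^s) \subset \CC[s_2,\ldots,s_m,x_1,\ldots,x_n]$ with an Eagon--Northcott resolution, making that ideal Cohen--Macaulay of codimension $n-1$. Extending scalars to $S = R[s_1,\ldots,s_m]$ preserves both the codimension and the Cohen--Macaulay property, since the extension is by a free polynomial variable. As the variable $s_1$ does not appear in $\Qi^s$, the linear form $\sum_{i=1}^m d_i s_i$ is monic in $s_1$ over $S/I_{m+1}(\Qi^s)$ and is therefore a non-zero-divisor on that quotient. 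Killing this regular element produces $S/K$, which is Cohen--Macaulay of codimension $n$.

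For the radical identity, I would show $\V(K) = \mathcal{L}_\cA$ as reduced subvarieties of $\Spec S$. Lemma~\ref{lem:localEqual} identifies the localization $K \cdot S_f$ with the ideal (\ref{eq:idealwithdenominators}): the maximal minors of $\Qi^s$ reproduce the partial-derivative generators, while the extra generator $\sum d_i s_i$ already belongs to (\ref{eq:idealwithdenominators}) by Euler's relation. Because $\mathcal{L}_\cA$ is by definition the Zariski closure of the vanishing locus of (\ref{eq:idealwithdenominators}) on $D(f)$, this gives the inclusion $K \subseteq I_\cA$, hence $\mathcal{L}_\cA \subseteq \V(K)$, together with the equality $\V(K) \cap D(f) = \mathcal{L}_\cA \cap D(f)$.

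The main (and only delicate) step is to verify that $\V(K) \cap D(f)$ is dense in $\V(K)$. By the irreducibility from Lemma~\ref{lem:minorsIdealVB}, it suffices to produce a single point of $\V(K)$ at which $f \neq 0$; any critical pair $(s,x)$ of $\ell_\cA$ provides such a point, and such pairs exist because the ML degree formula of Catanese et al.\ quoted in the introduction is strictly positive when $m \geq n+1$. Density then gives $\V(K) = \overline{\V(K) \cap D(f)} \subseteq \mathcal{L}_\cA$, completing $\V(K) = \mathcal{L}_\cA$ and therefore $\sqrt{K} = I_\cA$.
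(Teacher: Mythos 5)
Your proof is correct and follows essentially the same route as the paper: the Cohen--Macaulay and codimension claims come from Lemma~\ref{lem:minorsIdealVB} and Corollary~\ref{cor:minorsEN} together with the observation that $\sum_i d_i s_i$ is a non-zero-divisor modulo $I_{m+1}(\Qi^s)$ because $s_1$ does not occur there, and the radical identity rests on Lemma~\ref{lem:localEqual}. The only cosmetic difference is the final step: the paper invokes primality of $I_\cA$ and concludes directly from $K=I_\cA$ in $S_f$, whereas you phrase the same localization argument geometrically via irreducibility of $\V(K)$ and density of $\V(K)\cap D(f)$, justified by positivity of the ML degree.
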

\begin{proof}
By Lemma~\ref{lem:minorsIdealVB} and Corollary~\ref{cor:minorsEN}, $I_{m+1}(\Qi^s)$ is primary to the codimension $n-1$ prime ideal $\sqrt{I_{m+1}(\Qi^s)}$. The linear form $\sum_i d_is_i$ involves a variable $s_1$ which does not appear in $I_{m+1}(\Qi^s)$, so is a non-zero divisor on $S/I_{m+1}(\Qi^s)$ and hence preserves the Cohen--Macaulay property, while increasing the codimension by one. Lemma 2.4 of \cite{ArrangementsAndLikelihood} shows that $I_{\cal A}$ is prime of codimension $n$. The result follows because
$K= I_{\cal A} $ in $S_f$,
by Lemma~\ref{lem:localEqual}.
\end{proof}

Proposition~\ref{prop:idealSummary} falls just short of being a full proof of Theorem~\ref{thm:main}: while $K$ is $I_{\cal A}$-primary, it need not be reduced. The next section introduces the tools needed to complete the proof of Theorem~\ref{thm:main}. Finally, Theorem~\ref{thm:main2} is just a variant of Theorem~\ref{thm:main}.

\section{Multidegrees and Euler Discriminant}
\label{sec:multidegrees}

The goal of this section is twofold. First, we finish the proof 
started in Section \ref{sec:det_ideals}.
This is done by showing that $K = I_{m+1}(Q^s_{\backslash 1})+ 
\bigl\langle \sum_{i=1}^m d_i s_i \bigr\rangle$ 
has the same multidegree as the likelihood ideal $I_\cA$. That multidegree 
is given in Lemma~\ref{lem:generic_multidegree}.
Second, we justify our use of Euler's name,
by Theorem \ref{thm:Euler_discriminant_equals_mixed_discriminant}.
Namely,
we prove that vanishing of the Euler discriminant,
i.e.~the product $\prod_I \Delta_I$ in
Section~\ref{sec:main_result}, is equivalent to a drop in the Euler chararacteristic of
 $\PP^{n-1} \backslash \cA$.

The ideals $I_\cA$ and  $K$ 
are subsets of the polynomial ring $S$. They define subvarieties of
$\PP^{m-1} \times \PP^{n-1}$. The Chow ring of this product space is the truncated polynomial~ring $A^*(\PP^{m-1}\times \PP^{n-1}) = \ZZ[\sigma,\tau]/\langle\sigma^m,\tau^n\rangle$, where $\sigma$ and $\tau$ are hyperplane classes in $\PP^{m-1}$ and~$\PP^{n-1}$.

 The \emph{multidegree} $[I]$ of a
 bihomogeneous ideal $I$ in $S$ is the class of $\V(I)$ in $A^*(\PP^{m-1}\times \PP^{n-1})$. 
 If $\V(I)$ has codimension $n$ then the multidegree $[I]$
    is a binary form in $\sigma$ and $\tau$ of degree $n$.
    We consider this binary form    for the prime ideal $I_\cA$ of the likelihood correspondence $\mathcal{L}_\cA$:
\begin{equation}
    \label{eq:multidegree_binary_form}
    [I_\cA] \,\,=\,\, c_0\sigma^{n} + c_1\sigma^{n-1}\tau + c_2\sigma^{n-2}\tau^{2} + \dots + c_{n-1}\sigma\tau^{n-1}.
\end{equation}

Note that $\tau^n=0$ in $A^*(\PP^{m-1}\times \PP^{n-1})$.
For any arrangement $\cA$, 
the coefficient $c_j$ with $j = \max\left\{ j\in \left\{ 0,\dots,n-1 \right\} \midcolon c_j\neq 0 \right\}$ is the \emph{ML degree}; see \cite[Definition 3.1]{ArrangementsAndLikelihood}. 
This is $c_{n-1}$ when $\cA$ is SNC.
The ML degree is the
Euler characteristic of the arrangement complement $\PP^{n-1} \backslash \cA$,
 as was proved in \cite[Theorem~20]{catanese2006maximum} and generalized in 
 \cite[Theorem~1]{huh2013maximum}. Each coefficient $c_i$ is the Euler characteristic of the intersection of $\PP^{n-1} \backslash \cA$ with a general linear space of dimension $i$ in $\PP^{n-1}$. 
The  multidegree $[I_\cA]$ was computed    in \cite[Theorem 1]{catanese2006maximum}
for generic arrangements $\cA$.
 We restate this result,    with its genericity assumption paraphrased.

\begin{lemma}
    \label{lem:generic_multidegree}
    If $\cA$ is SNC then the multidegree $[I_\cA]$ is given by a simple
    generating function. Namely, the coefficient $c_i$ in
          (\ref{eq:multidegree_binary_form}) is
 the coefficient of $z^i$ in the Taylor series expansion of
    \begin{equation}
        \label{eq:generic_multidegree}
        \frac{(1-z)^n}{(1-d_1z)(1-d_2z)\,\cdots \,(1-d_mz)}~.
    \end{equation}
\end{lemma}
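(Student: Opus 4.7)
The plan is to compute $[I_{\cA}]$ via Chern class intersection theory, realizing $\mathcal{L}_\cA$ as the intersection of the zero locus of a tautological section of a vector bundle with the Euler hyperplane.

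On $\PP^{m-1}\times \PP^{n-1}$, the gradient $\sum_j s_j\,\d\log f_j$ is a global section of the rank-$(n-1)$ bundle $E := \pi_1^*\cO_{\PP^{m-1}}(1)\otimes \pi_2^*\Omega^1_{\PP^{n-1}}(\log\cA)$, and $\mathcal{L}_\cA$ equals its zero locus intersected with the Euler hyperplane $\{\sum d_j s_j = 0\}$ of class $\sigma$. The SNC hypothesis, via the constant-rank computation of Lemma~\ref{lem:kerQ_const}, is precisely what is needed for this section to meet the zero section transversally, so $[\mathcal{L}_\cA] = \sigma \cdot c_{n-1}(E)$.

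To compute $c_{n-1}(E)$, combine the Poincar\'e residue sequence
\[
0\longrightarrow \Omega^1_{\PP^{n-1}} \longrightarrow \Omega^1_{\PP^{n-1}}(\log\cA)\longrightarrow \bigoplus_{j=1}^m (i_j)_*\cO_{D_j}\longrightarrow 0
\]
with $c(\Omega^1_{\PP^{n-1}}) = (1-\tau)^n$ from the Euler sequence and $c(\cO_{D_j}) = 1/(1-d_j\tau)$ from $0\to\cO(-d_j)\to \cO\to \cO_{D_j}\to 0$. Whitney multiplicativity then yields
\[
c\bigl(\Omega^1_{\PP^{n-1}}(\log\cA)\bigr)\;\equiv\;\frac{(1-\tau)^n}{\prod_{j=1}^m(1-d_j\tau)}\pmod{\tau^n},
\]
where $\tau$ is the hyperplane class on $\PP^{n-1}$. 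Setting $a_k := [\tau^k]\,(1-\tau)^n/\prod(1-d_j\tau)$ so that $c_k(\Omega^1_{\PP^{n-1}}(\log\cA))=a_k\tau^k$, the standard twist identity gives $c_{n-1}(E) = \sum_{k=0}^{n-1} a_k\,\sigma^{n-1-k}\tau^k$. Multiplying by $\sigma$ produces
\[
[\mathcal{L}_\cA]\;=\;\sum_{i=0}^{n-1} a_i\,\sigma^{n-i}\tau^i,
\]
and matching with (\ref{eq:multidegree_binary_form}) identifies $c_i = a_i = [z^i]\tfrac{(1-z)^n}{\prod_j(1-d_jz)}$, as claimed.

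The step I expect to be most delicate is transversality: verifying that the gradient section of $E$ cuts a cycle of the expected codimension $n-1$. Under the SNC hypothesis this is already delivered by the rank computations in Lemmas~\ref{lem:rank_jacobian} and \ref{lem:kerQ_const}, so modulo those the proof reduces to the Chern class computation above. An alternative route is to invoke \cite[Theorem~1]{catanese2006maximum} directly, whose argument encodes essentially the same transversality input under a generic-position hypothesis that our SNC assumption manifestly implies.
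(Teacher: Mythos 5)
Your proposal is correct in outline, but it takes a genuinely different route from the paper for the simple reason that the paper does not prove this lemma at all: it is presented as a restatement of \cite[Theorem~1]{catanese2006maximum} (together with \cite[Theorem~1]{huh2013maximum} for the interpretation of the lower coefficients $c_i$), with the genericity hypothesis paraphrased as SNC. What you have written is essentially a reconstruction of the proof of that cited theorem, upgraded to the biprojective setting so that it delivers the full multidegree rather than only the top coefficient. It is also closely parallel to the paper's own Lemma~\ref{lem:multidegrees_agree}, which computes $[K]$ by applying Giambelli--Thom--Porteous to the bundle map $\varphi\colon\cE\to\cF$ determined by $\Qi^s$ and arrives at the degree-$(n-1)$ part of $(1-\tau)^n/\bigl((1-\sigma)\prod_j(1-d_j\tau)\bigr)$ --- the same class as your $c_{n-1}(E)$, which is no accident, since the degeneracy locus of $\Qi^s$ is the zero locus of the induced section. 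Your approach buys a self-contained derivation of $[I_\cA]$ itself; the paper's approach buys brevity by outsourcing exactly this computation.

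Two caveats. First, a minor one: $\sum_j s_j\,\d\log f_j$ does not descend to $\PP^{n-1}$ fibrewise (contraction with the Euler field gives $\sum_j d_js_j$), so it is a section of $E$ only over the Euler hyperplane $H=\V(\sum_j d_js_j)$; this is harmless, since $[\mathcal{L}_\cA]=\sigma\cdot c_{n-1}(E)$ follows by computing on $H$ and pushing forward, but it should be said. Second, and more substantively: the transversality step is not ``already delivered'' by Lemmas~\ref{lem:rank_jacobian} and \ref{lem:kerQ_const}. Those lemmas give the constancy of rank needed for $\Omega^1_{\PP^{n-1}}(\log\cA)$ to be locally free, i.e.\ for $E$ to exist as a bundle, but they do not show that the zero scheme of the section has pure expected codimension, is generically reduced along $\mathcal{L}_\cA$, and carries no excess components supported over $\cA$. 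For that one needs the residue argument (at a smooth point of $D_j$ not on the other divisors, the residue of the section along $D_j$ is $s_j$, so vanishing forces $s_j=0$) together with an inductive dimension count over the strata of $\cA$, or a Bertini/generic-smoothness argument over the $s$-space. Your fallback of invoking \cite[Theorem~1]{catanese2006maximum} for this input is legitimate --- indeed it is precisely what the paper does --- but then the Chern-class computation is a re-derivation of the citation rather than an independent proof.
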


\begin{example}[$n=3$]
    For $m$ curves in $\PP^2$, the multidegree of the likelihood ideal  equals
    $$ [I] \quad = \quad \sigma^3 \,\,+\,\, \left(\,\sum_{i=1}^m d_i - 3\right) \,\sigma^2 \tau \,\,+ \,\,
    \left( \sum_{1 \leq i \leq j \leq m} \!\!\! d_i d_j \, -\, 3 \sum_{i=1}^m d_i \,+\, 3 \right) \,\sigma \tau^2. $$
    The expression in the parentheses on the right is the ML degree.
    This is $13$ in Example \ref{ex:1plus3}.
\end{example}

We already know from the previous section that our ideal $K$
is contained in $I_\cA$ and that both ideals have codimension $n$.
We now show that they also have the same multidegree.

\begin{lemma}
    \label{lem:multidegrees_agree}
    The multidegree $[I_\cA]$ equals the multidegree $[K]$
    of the determinantal ideal $K$.
\end{lemma}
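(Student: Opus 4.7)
The plan is to view $\Qi^s$ as the matrix of a morphism $\phi \colon \mathcal{E} \to \mathcal{F}$ of vector bundles on $\PP^{m-1}\times\PP^{n-1}$ and invoke the Giambelli--Thom--Porteous (GTP) formula. Comparing the bidegrees of the entries of $\Qi^s$ row by row and column by column forces the presentation
\[
\mathcal{E} \;=\; \mathcal{O}^{\,m-1}\,\oplus\, \mathcal{O}(0,1)^{\,n}, \qquad \mathcal{F} \;=\; \mathcal{O}(1,0)\,\oplus\,\bigoplus_{i=1}^m \mathcal{O}(0,d_i),
\]
under which $\V(I_{m+1}(\Qi^s))$ is the degeneracy locus $D_m(\phi) = \{\,\rk \phi \le m\,\}$. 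By \Cref{lem:minorsIdealVB} this locus has codimension $n-1 = (\rk \mathcal{E}-m)(\rk \mathcal{F}-m)$, exactly the expected codimension for GTP, and by \Cref{cor:minorsEN} the ideal $I_{m+1}(\Qi^s)$ is Cohen--Macaulay, so its multidegree equals the Chow cycle class of $D_m(\phi)$.

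For the rectangular partition $(1^{n-1})$, the GTP formula gives
\[
[I_{m+1}(\Qi^s)] \;=\; \det\!\bigl(c_{1+j-i}(\mathcal{F}-\mathcal{E})\bigr)_{1\le i,j\le n-1},
\]
with the Chern classes of the virtual bundle $\mathcal{F}-\mathcal{E}$ being the coefficients of $c(\mathcal{F})/c(\mathcal{E})$. The linear form $\sum_i d_i s_i$ has bidegree $(1,0)$ and is a non-zero divisor on $S/I_{m+1}(\Qi^s)$, as in the proof of \Cref{prop:idealSummary}; hence the hyperplane it defines meets $\V(I_{m+1}(\Qi^s))$ properly, and $[K] = \sigma \cdot [I_{m+1}(\Qi^s)]$.

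To match this with \Cref{lem:generic_multidegree}, I would invoke the identity (a form of dual Jacobi--Trudi) that the above determinant of Chern classes equals the coefficient of $t^{n-1}$ in $1/c(\mathcal{F}-\mathcal{E})(-t) = c(\mathcal{E})(-t)/c(\mathcal{F})(-t)$. Substituting $c(\mathcal{F})(-t) = (1-\sigma t)\prod_i(1-d_i\tau t)$ and $c(\mathcal{E})(-t) = (1-\tau t)^n$ yields
\[
[I_{m+1}(\Qi^s)] \;=\; [t^{n-1}]\, \frac{(1-\tau t)^n}{(1-\sigma t)\prod_i(1-d_i\tau t)} \;=\; [t^{n-1}]\, \frac{R(\tau t)}{1-\sigma t},
\]
where $R(u) = (1-u)^n/\prod_i(1-d_i u) = \sum_k c_k u^k$ is the series of \Cref{lem:generic_multidegree}. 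Expanding the right-hand side produces $\sum_{k=0}^{n-1} c_k\,\sigma^{n-1-k}\tau^k$, so multiplying by $\sigma$ recovers $[I_\cA] = \sum_{k=0}^{n-1} c_k\,\sigma^{n-k}\tau^k$. The only delicate point is the expected-codimension hypothesis for GTP, which is precisely \Cref{lem:minorsIdealVB} and hinges on the SNC assumption; everything else is formal generating-function manipulation.
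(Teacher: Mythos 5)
Your proposal is correct and follows essentially the same route as the paper: interpret $\Qi^s$ as a bundle map on $\PP^{m-1}\times\PP^{n-1}$, apply Giambelli--Thom--Porteous using the expected codimension from \Cref{lem:minorsIdealVB}, and then multiply by $\sigma$ since $\sum_i d_i s_i$ is a non-zero divisor. The only difference is cosmetic: the paper twists the bundles the other way so that $\Delta^1_{n-1}$ is directly the degree-$(n-1)$ part of $c(\cF)/c(\cE)$, whereas your convention requires the (correct) Jacobi--Trudi step identifying the $(n-1)\times(n-1)$ determinant with the Segre class $[t^{n-1}]\,c(\cE)(-t)/c(\cF)(-t)$.
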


\begin{proof}
    We first compute $[K']$ where $K' = I_{m+1}(\Qi^s)$. 
    We record the bidegree of
    each entry in~$\Qi^s$. Bidegrees are linear forms in $\sigma$
    and~$\tau$, representing divisor classes in $A^*(\PP^{m-1}\times
    \PP^{n-1})$:
    \begin{equation}
        \label{equ:gradingMatrix}
        \begin{pmatrix}
          d_1\tau & \cdots & d_1\tau & (d_1-1)\tau & \cdots & (d_1-1)\tau \\
          \vdots & \ddots & \vdots & \vdots & \ddots & \vdots \\
          d_m\tau & \cdots & d_m\tau & (d_m-1)\tau & \cdots & (d_m-1)\tau \\
          \sigma & \cdots & \sigma & \sigma - \tau & \cdots & \sigma - \tau \\
        \end{pmatrix}.
    \end{equation}
    
     With this bigrading, every maximal minor of  the 
    $(m+1) \times (m+n-1)$ matrix $\Qi^s$
    is homogeneous. We can thus interpret $\Qi^s$
     as a map $\varphi$ of vector bundles over $\PP^{m-1}\times \PP^{n-1}$:
         \[
        \varphi\,\,\colon\,\, \cE \,=\, \cO(-1,0) \,\oplus \,\bigoplus_{i=1}^m \cO(0,-d_i) 
        \,\,\,\rightarrow \,\,\,\cF \,=\, \cO^{\oplus m-1} \,\oplus\, \cO(0,-1)^{\oplus n}.
    \]
    Here, $\cO$ is the trivial bundle on $\PP^{m-1} \times \PP^{n-1}$ and $\cO(d,e) = \cO_{\PP^{m-1}}(d) \boxtimes \cO_{\PP^{n-1}}(e)$. Since we are taking the maximal minors of 
$Q^s_{\backslash 1}$, we are looking for the rank-$m$ locus of $\varphi$. As $K = K' + \langle d_1s_1 + \dots + d_ms_m\rangle$ and $\codim(\V(K)) = n$, we have $\codim(\V(K')) = n-1$. We now
apply  the Giambelli--Thom--Porteous Formula \cite[Theorem 12.4]{3264}, which states that
    \[
        [K'] \,\,=\,\, \Delta^1_{n-1}\left( \frac{c(\cF)}{c(\cE)} \right),
    \]
    where $c$ is the total Chern class and $\Delta^1_{n-1}$ is a double Schur polynomial.
    In our case, the~right hand side is simply the
      degree $n-1$ part of $c(\cF)/c(\cE)$. The ratio of Chern classes equals
    \[
        \frac{c(\cF)}{c(\cE)} \,\,=\,\, \frac{(1-\tau)^n}{(1-\sigma) \prod_{i=1}^m (1-d_i\tau)}
        \,\, =\,\, \left(\sum_{j\geq 0} \sigma^j\right) \frac{(1-\tau)^n}{(1-d_1\tau)(1-d_2\tau)\cdots(1-d_m\tau)}.
    \]
    The degree $n-1$ part of this expression equals
$
        c_0\sigma^{n-1} \,+\,c_1 \sigma^{n-2}\tau \,+ \, c_2 
        \sigma^{n-3}\tau^2 \,+ \,\cdots \,+ \,c_{n-1} \tau^{n-1}$,
    with the $c_i$ as in \eqref{eq:generic_multidegree}. After intersecting with the transverse linear space $\V(d_1s_1 + \dots + d_ms_m)$, the multidegree above is multiplied by $\sigma$.
    Thus, the multidegree $[K]$ agrees with $[I_\cA]$.
\end{proof}

\begin{proof}[Proof of Theorems \ref{thm:main} and \ref{thm:main2}]
The ideal $K$ is contained in the likelihood ideal $I_\cA$. The former is
 Cohen--Macaulay, the latter is prime, and they  have same dimension.
  They share the same multidegree, by Lemma  \ref{lem:multidegrees_agree}.
  This implies that they must be equal, i.e.~$I_\cA = K$.
\end{proof}

We now turn to our second aim, namely to further elucidate the SNC hypothesis.
The parameter space $\PP$ for homogeneous
polynomials $f_1,\ldots,f_m \in \CC[x_1,\dots,x_n]$ of degrees $d_1,\dots,d_m$ is the product 
in (\ref{eq:parameterspace}).
For any subset $I = \{i_1,\dots,i_k\}$ in $[m] = \{1,2,\ldots,m\}$, 
the \emph{mixed discriminant variety} $\nabla_I$ is the Zariski closure of the set of degenerate $k$-tuples:
\begin{align*}
    \nabla_I \,\,&=\,\, \mathrm{cl}_\text{Zariski} \big\{ (f_{i_1},\dots,f_{i_k}) \in \PP^{\binom{d_{i_1}+n-1}{n-1} - 1} \times \dots \times \PP^{\binom{d_{i_k}+n-1}{n-1} - 1} \midcolon \\
    & \quad ~\V(f_{i_1},\dots, f_{i_k}) ~\text{is not smooth of codimension } k \big\} \subset \PP^{\binom{d_{i_1}+n-1}{n-1} - 1} \times \dots \times \PP^{\binom{d_{i_k}+n-1}{n-1} - 1}.
\end{align*}

This variety is either empty or a hypersurface. In the latter case, the mixed discriminant $\Delta_{I}$ is the defining polynomial of $\nabla_{I}$; otherwise one sets $\Delta_{I} = 1$.
See \cite{cattani2013mixed, ClaudiaSaiei, gelfand2008discriminants, Pok} for  details. 
The Euler discriminant
    $\Delta$ of the arrangement $\cA = \left\{ f_1,\dots,f_m \right\} $ was defined as the product
    \[        \Delta\,\, = \prod_{\substack{I\subset [m] \\ |I|\leq n}} \Delta_{I}.    \]
In what follows, this is called the {\em algebraic Euler discriminant}. We
write $\nabla = \V(\Delta)$ for the  reducible hypersurface it defines in the
parameter space $\PP$.
Our arrangement $\cA = \V(f)$ is SNC if and only if
$\cA \notin \nabla$ if and only if the algebraic Euler discriminant $\Delta$ is nonzero for $\cA$.
For extra clarity, we remind the reader that the notation $\cA \not\in \nabla$ means that the
vector of all coefficients of $f_1,\ldots,f_m$ is a point in $\PP$
that lies outside the hypersurface  $\nabla = \V(\Delta)$.

In the setting of \cite{gelfand2008discriminants},
 one would now refer to the
Cayley polytope of the scaled simplices $d_i \Delta_{n-1}$ 
for $i=1,2,\ldots,m$. This polytope has dimension $n+m-2$
and $mn$ vertices. Writing $A$ for this set of vertices, we consider
the principal $A$-determinant $E_A$ from
 \cite[Chapter 10]{gelfand2008discriminants}.
This is the product of the
discriminants over all faces of the Cayley polytope ${\rm conv}(A)$.

It follows from the definitions that the Euler discriminant $\Delta$
divides the principal $A$-determinant~$E_A$. 
Every factor of $\Delta$ arises from some 
   face of the Cayley polytope ${\rm conv}(A)$, but not all faces are needed.
   In other words, $E_A$ has more factors than~$\Delta$.
To be concrete, in the following example,
$\Delta$ is a polynomial of degree $12$, while
$E_A$ is a polynomial of degree~$60$.

\begin{example}[Four lines in $\PP^2$]\label{ex:4linesP2}
Let $n=3$ and $d_1=d_2=d_3=d_4=1$. The
Cayley polytope is $A = \Delta_2 \times \Delta_3$, which has
normalized volume $10$.
We write $C$ for the $4 \times 3$ matrix of
coefficients of the linear forms $f_1,\ldots,f_4$.
The entries of $C$ are coordinates on $\PP = (\PP^2)^4$.
The Euler discriminant $\Delta$ is the product of the four
$3 \times 3$ minors of $C$, so $\Delta$ is a polynomial
of degree $12$. The principal $A$-determinant $E_A$ is
the product of all minors of $C$, including those of
size $1 \times 1$ and $2 \times 2$. Hence $E_A$
is a polynomial of degree $60 = 6 \cdot 10 =
({\rm dim}(A)+1) \cdot {\rm vol}(A)$.
A general formula for $E_A$ in the case of hyperplane arrangements
is found in \cite[Theorem~3.9]{ClaudiaSaiei}.
\end{example}

A \emph{topological}
notion  of Euler discriminant, in the spirit of Esterov \cite{Esterov}, was studied recently in
 \cite{fevola2024landau} and in \cite{EulerStrati}. This Euler discriminant variety 
$\nabla_{\chi}$ is the Zariski closure of the parameter locus where the topological Euler characteristic 
$\chi$ of an arrangement complement differs from the generic Euler characteristic \cite[Definition 4.1]{EulerStrati}. 
The two notions agree in our context.

\begin{theorem}
    \label{thm:Euler_discriminant_equals_mixed_discriminant}
    The algebraic Euler discriminant $\nabla$  agrees with the topological Euler discriminant $\nabla_\chi$ of the     projective hypersurface family complement defined by $\,f = f_1f_2 \cdots f_m$.
\end{theorem}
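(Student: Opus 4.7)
My strategy is to prove $\nabla = \nabla_\chi$ by establishing both set-theoretic inclusions of these hypersurfaces in the parameter space $\PP$.

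For the inclusion $\nabla_\chi \subseteq \nabla$, I would show that every SNC arrangement attains the generic Euler characteristic. Combining Theorems~\ref{thm:main}--\ref{thm:main2} with Lemma~\ref{lem:multidegrees_agree} gives that for SNC $\cA$ the multidegree of $I_\cA$ is the generic one from~\eqref{eq:generic_multidegree}; in particular, the ML degree equals the universal constant $c_{n-1}$. Huh's theorem~\cite[Theorem~1]{huh2013maximum} equates $(-1)^{n-1}\chi(\PP^{n-1}\setminus \cA)$ with the ML degree for any very affine complement, so $\chi$ attains its generic value $\chi_{\mathrm{gen}} = (-1)^{n-1}c_{n-1}$ everywhere on the SNC locus $\PP\setminus\nabla$. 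An alternative direct route: SNC guarantees each intersection $\V(f_I)$ with $|I|\leq n$ is a smooth complete intersection of expected codimension (or empty), so inclusion-exclusion expresses $\chi(\PP^{n-1}\setminus\cA)$ as a universal Chern-class polynomial in the $d_i$, depending only on the discrete data of the arrangement.

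For the reverse inclusion $\nabla \subseteq \nabla_\chi$, I would argue component by component, using the decomposition $\nabla = \bigcup_{|I|\leq n} \V(\Delta_I)$ into irreducible discriminant hypersurfaces. Fix such an $I$ and pick a general $\cA^{*} \in \V(\Delta_I)$ that lies on no other component $\V(\Delta_J)$. Then $\cA^{*}$ exhibits a single minimal degeneracy of type $I$: a simple node on $\V(f_i)$ when $|I|=1$, a simple tangency of $\V(f_i)$ and $\V(f_j)$ when $|I|=2$, an isolated ordinary singularity of the complete intersection $\V(f_I)$ when $2 < |I| < n$, or a single isolated common zero when $|I|=n$. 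Tracking this degeneracy through inclusion-exclusion
\[
    \chi(\PP^{n-1}\setminus \cA^{*}) \,=\, \sum_{J\subseteq [m]}(-1)^{|J|}\chi(\V(f_J^{*}))
\]
shows that exactly one term differs from its generic value by a nonzero integer, controlled by the local Milnor number (or in the resultant case $|I|=n$, by one extra point). Hence $\chi(\cA^{*})\neq \chi_{\mathrm{gen}}$, and irreducibility of $\V(\Delta_I)$ forces $\V(\Delta_I) \subseteq \nabla_\chi$.

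The main obstacle is this second direction, specifically verifying that the perturbation of $\chi$ at a general point of each $\V(\Delta_I)$ really is nonzero and does not cancel across the inclusion-exclusion sum. The intermediate cases $2<|I|<n$ are most delicate, since one must check that an isolated singularity on a higher-codimensional complete intersection contributes a nontrivial jump while the other strata $\V(f_J^{*})$, $J\neq I$, retain their generic topology (which is why we choose $\cA^{*}$ away from all other discriminant components). For a uniform treatment I would invoke the stratified analysis of topological Euler discriminants developed by Telen and Wiesmann in~\cite{EulerStrati}, which provides the machinery to match $\chi$-jumps with discriminant vanishings across all codimensions.
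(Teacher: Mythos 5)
Your first inclusion, $\nabla_\chi \subseteq \nabla$, is essentially the paper's argument in contrapositive form: SNC forces the generic multidegree (Lemma~\ref{lem:generic_multidegree} together with Lemma~\ref{lem:multidegrees_agree}), hence the generic ML degree, hence the generic Euler characteristic via \cite{catanese2006maximum,huh2013maximum}. That direction is fine.

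The gap is exactly where you say it is, in the direction $\nabla \subseteq \nabla_\chi$, and you do not close it. Your inclusion--exclusion strategy requires showing that at a general point of each component $\V(\Delta_I)$ precisely one stratum $\V(f_J)$ changes its Euler characteristic, by a nonzero amount, with no cancellation; deferring this to \cite{EulerStrati} does not work as stated, because the results there (in particular the identification of the topological Euler discriminant with the principal $A$-determinant) concern a \emph{single} hypersurface family in a torus, not an arrangement of $m$ hypersurfaces in $\PP^{n-1}$. The missing idea that bridges this is the \emph{Cayley trick}: pass to the single auxiliary hypersurface $\V(\tilde f)$ with $\tilde f = y_1f_1 + \cdots + y_mf_m$ in $\PP^{m-1}\times\PP^{n-1}$. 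By \cite[Theorem~2.2]{ClaudiaSaiei} the torus complement of $\V(\tilde f)$ has the same Euler characteristic, up to sign, as the arrangement complement, and by \cite[Theorem~2.1]{cattani2013mixed} a generic point of $\nabla_I$ makes $\V(\tilde f)$ singular at a single isolated point, which generically lies off the coordinate hyperplanes. The jump in $\chi$ is then a single Milnor number of one isolated hypersurface singularity --- manifestly nonzero --- which replaces your delicate multi-stratum bookkeeping (and the separate treatment of the ICIS cases $2<|I|<n$ and the resultant case $|I|=n$) with one uniform computation. Only after this reduction do the results of \cite{EulerStrati} become applicable. If you want to salvage your direct stratified argument instead, you would need to supply the positivity of the ICIS Milnor numbers and the genericity argument isolating the single affected stratum yourself; the paper deliberately avoids this.
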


\begin{proof}
Applying the Cayley trick (see \cite[\S 2]{cattani2013mixed},
\cite[(2.5)]{ClaudiaSaiei}) we consider the auxiliary polynomial
$$\tilde{f}(x,y) \,= \,y_1f_1(x) + y_2f_2(x) + \dots + y_mf_m(x) \,\in\, R[y_1,\dots,y_m],
    $$ 
    where $y_1,y_2,\ldots,y_m$ are new unknowns.
    Thus $\tilde{f}(x,y)$ is a polynomial
        supported on the Cayley polytope of $f_1,f_2,\ldots,f_m$.
        In the notation of Section \ref{sec:main_result}, this is
         ${\rm conv}(\cA_I)$ for $I = [m]$.

Let $(\CC^*)^{n-1}$ be the dense torus in $\PP^{n-1}$, given by
$x_1 x_2 \cdots x_n \not= 0$. We also write
 $(\CC^*)^{m+n-2}$ for the dense torus in $\PP^{m-1} \times \PP^{n-1}$,
 given by $y_1 y_2 \cdots y_m x_1 x_2 \cdots x_n \not= 0$.
Theorem 2.2 in~\cite{ClaudiaSaiei} states that $(\CC^*)^{n-1} \backslash \cA$ and
$(\CC^*)^{m+n-2} \backslash V(\tilde{f})$
have the same Euler characteristic, up to sign.

This reduces our statement to the case of hypersurface families,
which was studied in detail in \cite{EulerStrati}, albeit for hypersurfaces
in a torus.
By Theorem 5.1 in \cite{EulerStrati}, the topological
Euler discriminant of the family given by $\tilde f$ is
precisely the principal $A$-determinant~$E_A$.
This shows that the algebraic Euler discriminant equals
the topological Euler discriminant, but for the
arrangement of $m+n$ hypersurfaces in $\PP^{n-1}$
given by $f_1,f_2,\ldots,f_m$ and $x_1,x_2,\ldots,x_n$.

For a generic point $\cA\in\nabla_I$ on some mixed discriminant, the hypersurface $V(\tilde{f})$ is singular \cite[Theorem 2.1]{cattani2013mixed}, so the Euler characteristic of its complement differs from its generic value by the Milnor number of the singularity. This singularity generically lies off the coordinate hyperplanes $x_1,x_2,\dots,x_n$, so the change in Euler characteristic can be observed both in $\bigl(\PP^{n-1} \times (\CC^*)^{m-1}\bigr)\backslash V(\tilde{f})$ and in $(\CC^*)^{m+n-2}\backslash V(\tilde{f})$. This shows the containment $\nabla \subseteq \nabla_\chi$.

Conversely, if $\cA$ is a generic point on $\nabla_\chi$, its Euler characteristic is not equal to its generic value. Since its absolute value is the leading coefficient of the multidegree $[I_\cA]$, by Lemmas \ref{lem:generic_multidegree} and \ref{lem:multidegrees_agree}, $\cA$ cannot be SNC, so $\cA\in \nabla$. This concludes the proof, showing $\nabla =  \nabla_\chi$.
\end{proof}

We now drop the adjectives ``algebraic'' and ``topological''
in front of ``Euler discriminant''. There is only one Euler discriminant $\nabla$. It characterizes the SNC condition for~$\cA$.


\section{Modules Associated to Arrangements}
\label{sec:homological}

The goal of this section is to connect likelihood geometry to the various modules that are
commonly associated to arrangements of hyperplanes and hypersurfaces. 
Recall that the
module of $R$-derivations is the free $R$-module $\Der(R)$ with basis
$\{\partial_1, \dotsc, \partial_n\}$.  
The \emph{module of derivations tangent to the arrangement $\cA$} is the submodule 
\begin{equation}
    \label{eq:def_Der_module}
    \Der(\cA)\,\, =\,\, \{\theta \in \Der(R) \midcolon \theta (f) \in 
        \langle f\rangle \}.
\end{equation}
See e.g.\ \cite{HalfreeCurves} for the basic definitions. If $f_1,\dots,f_m$ are coprime (e.g.\ if $\cA$ is SNC), we may 
replace \eqref{eq:def_Der_module} with the requirement that
$\theta(f_i) \in \langle f_i \rangle$ for all $i=1,\ldots,m$.
Lemma~2.2 in \cite{ArrangementsAndLikelihood} makes this more explicit. It displays several
modules which are all isomorphic to $\Der(\cA)$.

Derivations in $\Der(\cA)$ are closely related to 
  the likelihood correspondence~$\mathcal{L}_\cA$.
This was phrased in \cite[Theorem~2.11]{ArrangementsAndLikelihood} as an $R$-module homomorphism 
$\varphi  : \Der(\cA) \to \left( I_\cA \right)_{(1,\bullet)}$. Here,
derivations $\theta$ are mapped to their
evaluation  $\theta(\ell_\cA)$ at the log-likelihood function $\ell_\cA$.
The latter is a polynomial, and it lies
 in the likelihood ideal $I_\cA$, since
$f_i$ divides $\theta (f_i)$.
The subscript $(1,\bullet)$ denotes the part of degree one in the $s$ variables.
The map $\varphi$ is injective, and \Cref{thm:main} shows that it is an isomorphism 
when  the arrangement $\cA$ is SNC.

We now explain how to construct preimages of circuit syzygies under
the isomorphism $\varphi$. To this end, we replace the
bottom right zeros in $\Qi^s$ with the rational functions 
$\partial \ell_\cA/\partial x_j$:
   \begin{equation}
   \label{eq:Qsdelta}
    Q^{s,\partial}_{\backslash 1} \,\,=\,\, \begin{pmatrix}
        0 & \dots & 0 & \frac{\partial f_1}{\partial x_1} & \frac{\partial f_1}{\partial x_2} & \dots & \frac{\partial f_1}{\partial x_n} \smallskip \\
        f_2 & \dots & 0 & \frac{\partial f_2}{\partial x_1} & \frac{\partial f_2}{\partial x_2} & \dots & \frac{\partial f_2}{\partial x_n} \smallskip \\
        \vdots & \ddots & \vdots & \vdots & \vdots & \ddots & \vdots \smallskip \\
        0 & \dots & f_m & \frac{\partial f_m}{\partial x_1} & \frac{\partial f_m}{\partial x_2} & \dots & \frac{\partial f_m}{\partial x_n}  \smallskip \\
        s_2 & \cdots & s_m & \frac{\partial \ell_\cA}{\partial x_1} & \frac{\partial \ell_\cA}{\partial x_2} & \cdots & \frac{\partial \ell_\cA}{\partial x_n}
    \end{pmatrix}
\end{equation}
By (\ref{eq:idealwithdenominators}), these new matrix entries generate
the likelihood ideal $I_\cA$ in the localized ring $S_f$.
This implies that the
 maximal minors of $\Qi^{s,\partial}$ vanish on $\cL_\cA$; see also Proposition~\ref{prop:idealSummary}.  

For any maximal minor $D$, we
consider the Laplace expansion with
respect to the last~row:
\begin{equation}
    \label{eq:splitminor}
      D \,\,=\,\, \sum_{i\in I}  A_i(x) \cdot s_i \,+\, \sum_{j\in J} \theta_j(x) \cdot \frac{\partial \ell_\cA}{\partial x_j},
\end{equation}
Here $I \subset \{1,\dotsc, m\!-\!1\}$, $J \subset \{1,\dotsc,n\}$,
and the coefficients $A_i, \theta_j$ are polynomials in $ R$.  
We write $D(s,0)$ for the first sum in \eqref{eq:splitminor}, and we write
$D(0,\partial)$ for the derivation  $\sum_{j\in J} \theta_j \partial_j$.
The second sum in \eqref{eq:splitminor} is the evaluation of
$D(0,\partial) $ to the log-likelihood function $\ell_\cA$.

\begin{proposition}
    \label{prop:isoback} 
    If $\cA$ is SNC  then the matrix (\ref{eq:Qsdelta})
    and its minors (\ref{eq:splitminor}) 
realize the isomorphism
           between the module of tangent derivations and 
    the linear part of the likelihood~ideal:
$$
\varphi  \,\,:\,\, \Der(\cA) \,\simeq \,\left( I_\cA \right)_{(1,\bullet)},\,\,\,
D(0,\partial) \,\longleftrightarrow \,-D(s,0).
$$
\end{proposition}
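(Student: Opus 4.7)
The plan is to exhibit a single $S_f$-linear dependence among the $m+1$ rows of $Q^{s,\partial}_{\backslash 1}$, and then read the claim off from Laplace expansion along the last row.

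First I would verify the row identity
$$\text{row}_{m+1} \;=\; \frac{s_1}{f_1}\,\text{row}_1 \;+\; \sum_{i=2}^{m}\frac{s_i}{f_i}\,\text{row}_i$$
in $S_f^{\,m+n-1}$. The first $m-1$ coordinates agree because row $i$ (for $i\ge 2$) contributes $f_i$ in the $(i-1)$st column, so scaling by $s_i/f_i$ yields $s_i$, while row~$1$ contributes nothing in these columns. The last $n$ coordinates agree by the very definition $\partial\ell_\cA/\partial x_j = \sum_{i=1}^m (s_i/f_i)\,\partial f_i/\partial x_j$. Consequently every maximal minor $D$ of $Q^{s,\partial}_{\backslash 1}$ vanishes in $S_f$. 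Laplace expansion along the last row, combined with (\ref{eq:splitminor}), then gives
$$0 \;=\; D \;=\; D(s,0) \,+\, D(0,\partial)(\ell_\cA) \qquad \text{in }S_f,$$
so $D(0,\partial)(\ell_\cA) \,=\, -D(s,0)$.

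The right-hand side lies in the polynomial ring $S$, hence $\theta(\ell_\cA)$ is polynomial for $\theta := D(0,\partial) = \sum_{j\in J}\theta_j\,\partial_j$. Writing $\theta(\ell_\cA) = \sum_{k=1}^m s_k\cdot\theta(f_k)/f_k$ and using that $s_1,\dots,s_m$ are algebraically independent over $R$, I would conclude $f_k \mid \theta(f_k)$ for every $k$, i.e.\ $\theta\in\Der(\cA)$. Thus $\varphi(D(0,\partial))$ is defined and equals $-D(s,0)$, establishing the stated correspondence for every maximal minor $D$. Since, by Theorem~\ref{thm:main}, the polynomials $D(s,0)$ minimally generate $(I_\cA)_{(1,\bullet)}$ and $\varphi$ is an isomorphism under the SNC assumption, the rule $D(s,0)\mapsto -D(0,\partial)$ furnishes $\varphi^{-1}$ explicitly on a system of generators.

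The only nontrivial step is the row-dependence identity; everything else is formal. A mild subtlety occurs in the proof of tangency, where we exploit the algebraic independence of the $s_i$ over $R$ to separate the denominators $f_k$ and force divisibility. This is the unique place where it matters that $s_1,\dots,s_m$ are indeterminates rather than specific complex parameters.
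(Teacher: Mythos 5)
Your proof is correct and follows essentially the same route as the paper's (very terse) argument: the paper likewise reads the correspondence off the Laplace expansion \eqref{eq:splitminor} after noting that the maximal minors of $Q^{s,\partial}_{\backslash 1}$ lie in $I_\cA S_f$. Your version is in fact sharper on the key point --- you show $D$ vanishes \emph{identically} via the explicit row relation $\mathrm{row}_{m+1}=\sum_{i=1}^m (s_i/f_i)\,\mathrm{row}_i$, which is what gives the exact equality $\varphi(D(0,\partial))=-D(s,0)$ rather than a congruence modulo $I_\cA S_f$, and you also supply the tangency check $f_k\mid\theta(f_k)$ (via algebraic independence of the $s_i$) that the paper leaves implicit.
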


\begin{proof}
The rational function $D$ in (\ref{eq:splitminor}) lies in the image
of the likelihood ideal $I_\cA$ in $S_f$.
\end{proof}

As an immediate consequence of Proposition~\ref{prop:isoback} we get  minimal generators of $\Der (\cA)$.

\begin{corollary}\label{cor:DerGen}
   The module $\Der(\cA)$ of an SNC arrangement $\cA$ with no hyperplanes is minimally generated by the $\binom{m+n-1}{n-2}$ circuit syzygies
   $D(0,\partial)$ and the Euler derivation $\sum_{j=1}^n x_j \partial_j$.
\end{corollary}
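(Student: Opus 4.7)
The plan is to translate the statement through the $R$-module isomorphism $\varphi \colon \Der(\cA) \xrightarrow{\sim} (I_\cA)_{(1,\bullet)}$ furnished by Proposition \ref{prop:isoback}, and then read off a minimal generating set on the $I_\cA$ side via Theorem \ref{thm:main}. Since $\cA$ has no hyperplanes, all degrees satisfy $d_i \geq 2$, so Theorem \ref{thm:main} applies as stated and yields a minimal generating set of $I_\cA$ consisting of the linear form $\sum_i d_i s_i$ together with the $\binom{m+n-1}{n-2}$ maximal minors of $Q^s_{\backslash 1}$.

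Next I would observe that all of these generators are $s$-linear, so together they generate $(I_\cA)_{(1,\bullet)}$ as an $R$-module: any bihomogeneous element of $s$-degree one in $I_\cA$ is an $R$-linear combination of them, because multiplying any of the listed generators by some $s_j$ would push the $s$-degree strictly above one. Moreover, bihomogeneous minimality of the full generating set of $I_\cA$ forces minimality of the induced generating set on the graded $R$-module slice $(I_\cA)_{(1,\bullet)}$.

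Now I would identify the preimages under $\varphi$ with the two families in the corollary. The defining rule $\theta \mapsto \theta(\ell_\cA)$ combined with Euler's identity $E(f_i) = d_i f_i$ gives $\varphi(E) = \sum_i s_i \cdot d_i$, matching the Euler derivation $E = \sum_j x_j\partial_j$ with the linear generator. For each maximal minor $D$ of $Q^{s,\partial}_{\backslash 1}$, the split in \eqref{eq:splitminor} together with Proposition \ref{prop:isoback} gives $\varphi(D(0,\partial)) = -D(s,0)$, and the minors $D(s,0)$ obtained by Laplace expansion along the last row are precisely the maximal minors of $Q^s_{\backslash 1}$ listed in Theorem \ref{thm:main}. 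This matches the $\binom{m+n-1}{n-2}$ circuit syzygies $D(0,\partial)$ with the corresponding generators on the likelihood side.

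The only delicate point, and the one I would flag as the main obstacle, is the transfer of \emph{minimality} across $\varphi$. Because $\varphi$ is an isomorphism of finitely generated graded $R$-modules, the minimal number of generators is $\dim_{\CC}(\Der(\cA)/\mathfrak{m}\Der(\cA))$ with $\mathfrak{m} = \langle x_1,\dots,x_n\rangle$, and this dimension is preserved under any graded isomorphism. Combined with the explicit matching of bihomogeneous generators above, this yields that the Euler derivation together with the $\binom{m+n-1}{n-2}$ circuit syzygies $D(0,\partial)$ form a minimal generating set of $\Der(\cA)$. No commutative-algebra input beyond Theorem \ref{thm:main} and Proposition \ref{prop:isoback} is required.
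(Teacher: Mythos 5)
Your proposal is correct and follows the same route the paper intends: the corollary is stated there as an immediate consequence of Proposition~\ref{prop:isoback}, i.e.\ one pulls the minimal generating set of $I_\cA$ from Theorem~\ref{thm:main} back through the isomorphism $\varphi$, matching $\sum_i d_i s_i$ with the Euler derivation and the maximal minors $D(s,0)$ of $Q^s_{\backslash 1}$ with the circuit syzygies $D(0,\partial)$. Your added care about why the ideal generators also minimally generate the $R$-module slice $(I_\cA)_{(1,\bullet)}$, and why minimality transfers across a graded isomorphism, fills in exactly the details the paper leaves implicit.
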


If $\cA$ contains hyperplanes,
then the number of circuit syzygies must
be corrected as in Theorem~\ref{thm:main2}. If all the $f_i$ are linear, then we are in the setting of a generic hyperplane arrangement. For such arrangements Rose--Terao~\cite{RoseTerao} and Yuzvinsky~\cite{Yuzvinsky} present a minimal free resolution of $\Der(\cA)$. Their main tool is a result of Lebelt \cite{lebelt1977freie} on constructing free resolutions of exterior powers of a module.  We now apply this in our non-linear situation.

Lebelt's idea is to build a resolution of an exterior power of a module from a known resolution of that module. The Euler derivation $\theta_E =\sum_{j=1}^n x_j \partial_j$ generates a free rank one summand of $\Der(A)$, and as in \cite{HalfreeCurves} we write $\Der(A) = \Der_0(A) \oplus R\theta_E$. The dual of  $\Der(\cA)$ is the module of logarithmic 1-forms with poles along~$\cA$, denoted $\Omega^1(\cA) = \Hom_R(\Der(\cA), R)$. The pruned version is $\ker(\Qi) \simeq \Der_0(\cA)$, and dually $\Omega_0^1(\cA) = \Hom_R(\Der_0(\cA), R)$. 

Lebelt's 
construction is applied to $\Der_0 (\cA) \simeq \Omega_0^{n-2}(\cA) = \bigwedge^{n-2}\Omega^1_0(\cA)$.  A short free resolution of $\Omega^1_0 (\cA)$ is given by observing that it equals the cokernel of $\Qi^T$:
\[
0 \to R^m \xrightarrow{\Qi^T} R^{m+n-1} \to \Omega^1_0(\cA) \to 0.
\]
Section~3.3.4 of \cite{MaxThesis} works out the details on constructing
Lebelt's resolution of $\Der_0(\cA)$.

Our Section~\ref{sec:det_ideals} offers an alternative route to a minimal free resolution of $\Der_0(\cA)$.
First note that
the Buchsbaum--Rim complex does not yield a resolution for $\Der(\cA)$. 
The matrix $Q$ is of size $m \times (m+n)$. In order for the Buchsbaum--Rim complex to be exact, the maximal minors of $Q$ would need to contain a regular sequence of length $n+1$.  This does not exist in $R=\CC[x_1,\ldots,x_n]$.  But it works for the pruned module $\Der_0(\cA)$.  By Proposition~\ref{circuitSyzgenerate}, the Buchsbaum--Rim complex is a resolution for~$\ker(\Qi)$.  A minimal free resolution of $\Der(\cA) \simeq \ker(Q)$ results after adding a direct summand to account for the Euler derivation.

We now relate our results to two favorable properties of arrangements: being tame and being gentle.  For SNC arrangements, the modules of logarithmic $p$-forms with poles along $\cA$ are  simply exterior powers.
Namely,
we have $\Omega^p(\cA) = \bigwedge^p \Omega^1(\cA)$ by \cite[Lemma~3.3.27]{MaxThesis}. 

Recall (e.g.~from \cite{cohen2011critical})
that an arrangement $\cA$ is {\em tame} if
\[ 
  \pd_R(\Omega^p(\cA)) \leq p \quad \text{for all }\,\, 0\leq p \leq n. 
  \]
In the hyperplane setting, tameness is a well-studied weakening of the freeness property, see \cite[Def.\ 2.2]{cohen2011critical}.  Using Lebelt's construction, short enough resolutions exist for each $\Omega^p(\cA)$.
\begin{theorem}[{\cite[Corollary~3.3.33]{MaxThesis}}]\label{cor:genericImpliesTame}
  Any SNC arrangement of hypersurfaces is tame.
\end{theorem}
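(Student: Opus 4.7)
My plan is to bootstrap from the short free presentation of $\Omega^1_0(\cA)$ to finite free resolutions of all its exterior powers, using Lebelt's construction \cite{lebelt1977freie}. The first reduction is to peel off the Euler derivation. Dualizing $\Der(\cA) = \Der_0(\cA) \oplus R\theta_E$ gives $\Omega^1(\cA) = \Omega^1_0(\cA) \oplus R$, and combining this with the SNC identification $\Omega^p(\cA) \simeq \bigwedge^p \Omega^1(\cA)$ from \cite[Lemma 3.3.27]{MaxThesis} yields the decomposition
\[
\Omega^p(\cA) \,\cong\, \bigwedge\nolimits^p \Omega^1_0(\cA) \,\oplus\, \bigwedge\nolimits^{p-1} \Omega^1_0(\cA).
\]
Since projective dimension distributes over direct sums as a maximum, it suffices to prove $\pd_R\bigl(\bigwedge^p \Omega^1_0(\cA)\bigr) \leq p$ for every $p$ in the relevant range.

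Next I would feed the presentation
\[
0 \longrightarrow R^m \xrightarrow{\Qi^T} R^{m+n-1} \longrightarrow \Omega^1_0(\cA) \longrightarrow 0
\]
into Lebelt's machine. From such a length-one free resolution, Lebelt constructs a canonical complex of length $p$, built from Schur functors applied to the two free modules, that resolves $\bigwedge^p$ of the cokernel provided suitable depth bounds on the Fitting ideals of the presentation map are met. Concretely, acyclicity at the $k$-th spot amounts to the Fitting ideal $I_{m-k+1}(\Qi^T)$ carrying a regular sequence of length roughly $k$.

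The crux of the argument, and the only place where the SNC hypothesis is genuinely needed, is verifying these depth inequalities. My plan here is to replay the stratified local analysis already performed in Section~\ref{sec:det_ideals}: Lemmas~\ref{lem:rank_jacobian} and~\ref{lem:kerQ_const} give sharp rank information for $\Qi$ along each stratum $X_I$ and show that $\coker(\Qi)$ is supported on the irrelevant ideal, whence $\sqrt{I_m(\Qi^T)} = \langle x_1,\dots,x_n\rangle$ by Fitting's lemma. Running the same stratified rank count, but now applied to submatrices of $\Qi^T$ of the appropriate sizes, should yield analogous radical computations, and hence regular sequences of the required length inside each $I_{m-k+1}(\Qi^T)$.

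I expect the main obstacle to be precisely this final step: upgrading the rank data of Lemma~\ref{lem:rank_jacobian} (which is tailored to the maximal minor) into simultaneous depth bounds for \emph{all} the lower-order Fitting ideals of $\Qi^T$. The SNC transversality of intersections is exactly the right input, but the bookkeeping is delicate, because smaller minors vanish on larger subvarieties and one must verify stratum-by-stratum that those subvarieties still have the codimension demanded by Lebelt's depth condition. Once these estimates are in place, Lebelt's complex is a genuine free resolution of length at most $p$, and tameness of $\cA$ follows immediately.
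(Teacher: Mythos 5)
Your proposal follows essentially the same route as the paper, which defers the details to \cite[\S 3.3.4]{MaxThesis}: split off the Euler derivation, identify $\Omega^p(\cA)$ with exterior powers of $\Omega^1_0(\cA)$ using the SNC hypothesis, and feed the length-one presentation of $\Omega^1_0(\cA)$ by $\Qi^T$ into Lebelt's construction, with the required depth bounds on the Fitting ideals supplied by the constant-rank analysis of Section~\ref{sec:det_ideals} (which forces all nontrivial Fitting ideals of $\Qi$ to be supported on the irrelevant ideal, hence of maximal grade $n$). This is correct and matches the intended argument.
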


Now we turn to gentleness \cite{ArrangementsAndLikelihood}.
 If $\begin{psmallmatrix}
    A\\
    B
\end{psmallmatrix}\in R^{(m+n) \times t}$ is a matrix that generates the kernel of~$Q$, where $A\in R^{m\times t}$ and $B\in R^{n\times t}$, then the \emph{likelihood module}  is $M(\cA) = \coker(A)$. In our SNC situation, if $d_i \ge 2$ for all $i$, this module has $m$ generators and $\binom{m+n-1}{n-2} + 1$ relations by Corollary~\ref{cor:DerGen}.  If there are linear forms, the modifications in Theorem~\ref{thm:main2} apply.  

The
\emph{pre-likelihood ideal} $I_0$ is derived from the matrix $A$ and generated by
the entries of $(s_1,\dotsc, s_m)\cdot A$.  Let $K = I_{m+1}(\Qi^s) + \langle \sum_i d_is_i \rangle$ as in Section~\ref{sec:det_ideals}. For any arrangement $\cA$,
\begin{equation}
    \label{eq:ideal_inclusions}
    K \,\subseteq \,I_0 \,\subseteq\, I_\cA.
\end{equation}

The arrangement $\cA$ is called \emph{gentle} if $I_0 = I_\cA$.  By \cite[Theorem~1.1]{ArrangementsAndLikelihood} the likelihood ideal $I_\cA$ is  the presentation ideal of the \emph{Rees algebra} $\R(M(\cA))$ of the likelihood module. The ideal $I_0$ in turn presents the symmetric algebra $\Sym(M(\cA)) \simeq S/I_0$.  They coincide if and only if the pre-likelihood ideal $I_0$ is prime~\cite[Proposition~2.9]{ArrangementsAndLikelihood}.  By \Cref{thm:main}, if $\cA$ is SNC, the inclusions in \eqref{eq:ideal_inclusions} are equalities.  In particular, we obtain the following statement.
\begin{corollary}\label{cor:genericImpliesGentle}
Every SNC arrangement is gentle.
\end{corollary}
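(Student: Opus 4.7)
The plan is to extract gentleness directly from the sandwich of ideals $K \subseteq I_0 \subseteq I_\cA$ recorded in \eqref{eq:ideal_inclusions}, using the equality $K = I_\cA$ that has just been established. Concretely: by Theorem~\ref{thm:main} (and its variant Theorem~\ref{thm:main2} when hyperplanes are present), when $\cA$ is SNC the likelihood ideal $I_\cA$ is generated by the linear form $\sum_i d_i s_i$ together with the maximal minors of $Q^s_{\backslash 1}$, i.e.\ $I_\cA = K$. Plugging this into \eqref{eq:ideal_inclusions} collapses the chain, forcing $I_0 = I_\cA$. This equality is, by definition, what it means for $\cA$ to be gentle.

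A second, equivalent way to phrase the conclusion is through the algebras the two ideals present. By \cite[Theorem~1.1]{ArrangementsAndLikelihood}, the likelihood ideal $I_\cA$ is the presentation ideal of the Rees algebra $\R(M(\cA))$, while the pre-likelihood ideal $I_0$ presents the symmetric algebra $\Sym(M(\cA))$. The identification $I_0 = I_\cA$ therefore yields $\Sym(M(\cA)) \simeq \R(M(\cA))$, which is the statement that the likelihood module $M(\cA)$ is of linear type; this recovers the claim flagged in the introduction.

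There is no genuine obstacle at this step: all the hard work is upstream, in the determinantal identification $I_\cA = K$ proved in Section~\ref{sec:multidegrees} via the multidegree comparison of Lemma~\ref{lem:multidegrees_agree} and the Cohen--Macaulayness coming from Corollary~\ref{cor:minorsEN}. The proof of Corollary~\ref{cor:genericImpliesGentle} itself is a one-line bookkeeping consequence of that identification combined with \eqref{eq:ideal_inclusions}.
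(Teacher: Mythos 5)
Your argument is correct and is exactly the paper's: the chain $K \subseteq I_0 \subseteq I_\cA$ from \eqref{eq:ideal_inclusions} collapses once Theorem~\ref{thm:main} (resp.\ Theorem~\ref{thm:main2}) gives $K = I_\cA$, yielding $I_0 = I_\cA$, which is the definition of gentle. The remark about linear type via \cite[Theorem~1.1]{ArrangementsAndLikelihood} is a faithful restatement of the same conclusion, matching the discussion preceding the corollary in the paper.
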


This result was known in the linear case. A hyperplane arrangement
is SNC if and only if its matroid is uniform. This means
that all maximal minors of its defining matrix are non-zero (cf.~Example~\ref{ex:4linesP2}).
  It is known that linear SNC arrangements are tame.  And, tame linear arrangements are gentle, by an elaborate homological argument~\cite[Corollary~3.8]{cohen2011critical}.

Computing the likelihood ideal $I_\cA$ for a gentle arrangement is much easier than for
 arbitrary arrangements, where it requires a time-consuming saturation step~\cite[Remark~2.10]{ArrangementsAndLikelihood}.
 Theorems~\ref{thm:main} and~\ref{thm:main2} further simplify the situation.
Under the SNC hypothesis,  the ideal $I_\cA$ can be written down without any computation.
For a subsequent research project, one  might like
 to identify situations in which $K$ is close to $I_\cA$ or close to~$I_0$, or generally use the minors of $\Qi^s$ as an approximation of the likelihood correspondence.  
Studying the primary decompositions of such ideals for arrangements on the Euler discriminant 
will lead to interesting challenges.  One starting point would be when the hypersurfaces have isolated singular points, which occur away from the intersection of two or more hypersurfaces.

\smallskip

We remark in closing that  the literature in algebraic statistics
has mostly focused on implicit models, given by
a variety $\mathcal{V}$ of dimension $n-1$ in projective space $ \PP^{m-2}$, 
from which $m$ hyperplanes are removed \cite{HS14}. The analogue to our
SNC setting arises when $\mathcal{V}$ is a complete intersection.
It would be interesting to extend our results to 
the augmented Jacobian matrix in~\cite[Section 3]{HKS}.
Ideals and syzygies are now over the coordinate ring~$\CC[\mathcal{V}]$.

\bigskip \medskip

\noindent {\bf Acknowledgements}: 
Part of this project was done at the Max Planck Institute 
(MPI-MiS) in Leipzig.
TK and HS  thank MPI-MiS for its great working atmosphere. 
TK~is supported by the DFG (SPP 2458, 
539866293).
HS is supported by the NSF (DMS 2006410).
BS and MW are supported by the ERC (UNIVERSE+, 101118787). 
\ 
\begin{scriptsize} 
Views~and opinions 
expressed 
\vspace{-0.12cm}
are however those of the authors only and do not necessarily reflect those of the European Union or the European
Research Council Executive Agency. Neither the European Union nor the granting authority can be held responsible for them.
\end{scriptsize}

\let\OLDthebibliography\thebibliography
\renewcommand\thebibliography[1]{
  \OLDthebibliography{#1}
  \setlength{\parskip}{0pt}
  \setlength{\itemsep}{0pt plus 0.3ex}
}

\small

\bibliographystyle{plain}
\bibliography{bib.bib}
\normalsize

\bigskip \medskip

\noindent
\footnotesize {\bf Authors' addresses:}

\noindent Thomas Kahle, OvGU Magdeburg, Germany,
{\tt thomas.kahle@ovgu.de}

\noindent Hal Schenck, Auburn University, Alabama, US, {\tt hks0015@auburn.edu}

\noindent Bernd Sturmfels, MPI-MiS Leipzig, {\tt bernd@mis.mpg.de} and
UC~Berkeley, {\tt bernd@berkeley.edu}

\noindent Maximilian Wiesmann, Center for Systems Biology, Dresden, 
 {\tt wiesmann@pks.mpg.de}

\end{document}